\def\string\COMMIT{]]..r..[[}]])
\newcommand{\F}{\mathbb F}
\newcommand{\cX}{\mathcal X}
\newcommand{\cA}{\mathcal A}
\newcommand{\cW}{\mathcal W}
\def\cI{\mathcal I}
\newcommand{\C}{\mathcal C}
\newcommand{\bC}{\mathbb C}
\newcommand{\bF}{\mathbb F}
\newcommand{\PP}{\mathbb{P}}
\newcommand{\bP}{\mathbb{P}}
\newcommand{\cP}{\mathcal P}
\newcommand{\VV}{\mathbb V}
\newcommand{\cC}{\mathcal C}
\newcommand{\PG}{\mathrm{PG}}
\newcommand{\PGL}{\mathrm{PGL}}
\newcommand{\Aut}{\mathrm{Aut}\,}
\newtheorem{theorem}{Theorem}[section]
\newtheorem{lemma}[theorem]{Lemma}
\newtheorem{corollary}[theorem]{Corollary}
\newtheorem{proposition}[theorem]{Proposition}
\theoremstyle{definition}
\newtheorem{remark}[theorem]{Remark}
\newtheorem{example}[theorem]{Example}
\newtheorem{definition}{Definition}
\newenvironment{nouppercase}{%
  \renewcommand{\uppercasenonmath}[1]{}}{}
\title{Waring identifiable subspaces over finite fields}
\author{Michel Lavrauw and Ferdinando Zullo}
\thanks{
The first author acknowledges the support of {\em The Scientific and Technological Research Council of Turkey} T\"UB\.{I}TAK (project no.~118F159).\\
The second author was supported by the project ``VALERE: VAnviteLli pEr la RicErca" of the University of Campania ``Luigi Vanvitelli'' and by the Italian National Group for Algebraic and Geometric Structures and their Applications (GNSAGA - INdAM)}
\begin{document}
\begin{nouppercase}
\maketitle
\end{nouppercase}

\begin{abstract}
Waring's problem, of expressing an integer as the sum of powers, has a very long history going back to the 17th century, and the problem has been studied in many different contexts.
In this paper we introduce the notion of a {\it Waring subspace} and a {\it Waring identifiable subspace} with respect to a projective algebraic variety $\cX$. When $\cX$ is the Veronese variety, these subspaces play a fundamental role in the theory of symmetric tensors and are related to the Waring decomposition and Waring identifiability of symmetric tensors (homogeneous polynomials). We give several constructions and classification results of Waring identifiable subspaces with respect to the Veronese variety in $\PP^5(\bF_q)$ and in $\PP^{9}(\bF_q)$, and include some applications to the theory of linear systems of quadrics in $\PP^3(\bF_q)$.
\end{abstract}

\bigskip
{\it AMS subject classification:} 15A69; 15A63; 14J70; 15A72; 51E20; 14N07.

\bigskip
{\it Keywords:} Waring subspace; symmetric tensor; tensor decomposition; Waring identifiability; Veronese variety.

\section{Introduction}

Determining the rank and the decomposition of tensors is an important and notoriously difficult problem. A special case of this is one of the classical problems for symmetric tensors (multilinear forms): determine the minimum integer $k$ such that a generic symmetric tensor $f\in \mathrm{Sym}^d(V)$ can be written as the sum of $k$ pure tensors of $\mathrm{Sym}^d(V)$. This problem is a reformulation of writing a homogeneous polynomial $f$ of degree $d$ as the sum of $d$-th powers of linear forms, and can be seen as a generalisation of the number theory problem posed by Waring in \cite{Waring}. The connection is given by the correspondence between homogeneous polynomials of  degree $d$ in $\mathbb{F}[X_0,\ldots,X_n]$ and the elements of $\mathrm{Sym}^d(V)$ through so-called {\it polarization}.
The value $k$ is called the \emph{Waring rank of $f$} and the decomposition of $f$ into the sum of $k$ pure tensors is called a {\it Waring decomposition}. 
If the linear forms appearing in a minimal decomposition are unique, up to a nonzero scalar multiple, then $f$ is called \emph{Waring identifiable}. The question of identifiability is naturally interesting on its own and has many applications. When $\bF$ is the field of complex numbers, the Waring rank of a {\it generic} form in $\mathrm{Sym}^d(\bC^{n+1})$ was determined by Alexander and Hirschowitz in \cite{AlHi1995}. Three decades later Chiantini, Ottaviani and Vanniewenhoven \cite{ChOtVa2017} showed that in all but three exceptions, a general form in $\mathrm{Sym}^d(\bC^{n+1})$ of subgeneric rank is Waring identifiable, extending results from Ranestad and Voisin \cite{RaVo2017}, and Ballico \cite{Ballico2005}.

The notions introduced above can be generalized to $r$-tuples of forms in 
$$\mathbb{F}[X_0,\ldots,X_n]_{d_1}\times \ldots \times \mathbb{F}[X_0,\ldots,X_n]_{d_r}$$ 
in which case one speaks of \emph{simultaneous Waring decomposition}, \emph{Waring rank} and \emph{Waring identifiability}. 
When $\mathbb{F}\in \{\mathbb{R},\mathbb{C}\}$, a well studied problem is to find the parameters $(r,n,d_1,\ldots,d_r,k)$ for which a {\it generic} $r$-tuple of forms is Waring identifiable. For more on this topic (and further references), we refer the interested reader to the recent papers \cite{AGMO} and \cite{AnCh2020}.

\bigskip

The aim of this paper is to pursue the study of Waring identifiability over finite fields, in particular for $\mathrm{Sym}^d(V)$, where $V$ is a finite-dimensional vector space over a finite field $\bF=\bF_q$.
Note that this is related, but different, from the problem of determining the tensor rank (or symmetric tensor rank) of tensors over finite fields (see e.g. \cite{Laskowski1982}, \cite{ChCh1988}, \cite{LaPaZa2013}, \cite{CeOz2010}, \cite{LaSh2022}, \cite{SoZhHu2022} and references therein), which is related to complexity theory, and where one is usually interested in either bounding the possible number of terms of a minimal decomposition of a tensor or finding a decomposition.
For our approach, we introduce the notion of a {\it Waring subspace} and the notion of a {\it Waring identifiable} subspace.
These can be seen as extensions of the notions of \emph{decomposition} and \emph{identifiability} given in \cite{BBCC}
by Ballico, Bernardi,  Catalisano and  Chiantini for any irreducible non-degenerate projective variety over an algebraically closed field of characteristic zero. These notions will be defined in Section \ref{sec:preliminaries} in a more general context, namely with respect to any algebraic variety $\cX$, or any set of points $\cA$. In the particular case of $\cX$ being equal to the Veronese variety, a Waring subspace corresponds to a subspace of $\mathrm{Sym}^d(V)$ spanned by pure symmetric tensors. 

Besides the intrinsic theoretical interest, the study of Waring subspaces is also motivated by applications in coding theory, see e.g. \cite{code1,code3,Aubry1992}.
For instance in \cite{code1} Edoukou studied the functional codes associated with quadrics in $\mathbb{P}^3(\mathbb{F}_q)$. For a fixed quadric $\cX$ in $\bP^3(\bF_q)$, with $|\cX(\bF_q)|=n$, the codewords of the code $\C_2(\cX)$ are the vectors of $\bF_q^n$ obtained by evaluating quadratic forms on $\bP^3(\bF_q)$ at the $\bF_q$-rational points of $\cX$. The weight of a codeword $c(f)$ of $\C_2(\cX)$ is determined by the size of intersection of $\cX(\bF_q)$ with the quadric defined by $f$. So in order to determine the parameters of these codes one needs to understand pencils of quadrics, and in particular the size of the base of these pencils. The base of such a pencil corresponds to the intersection of a Veronese variety $\VV_{3,2}$ with a subspace of co-dimension two in $\bP^9(\bF_q)$.
As we will see in the next sections, this is the kind of problems which are studied in this paper.

\bigskip

\begin{remark}
We note that a different version of Waring's problem over finite fields, where one is interested in determining the minimum number $g(k,p^n)$ such that every element of the finite field $\bF_{p^n}$ of order $p^n$ can be written as the sum of at most $g(k,p^n)$ $k$-th powers of elements in $\bF_{p^n}$ has been well-studied since the 1970's, see for example Dodson \cite{Dodson1971}, Dodson and Tiet\"av\"ainen \cite{DoTi1976}, Konyagin \cite{Konyagin1992} and Winterhof \cite{Winterhof1998}.
\end{remark}

\bigskip

The paper is organized as follows. 
In Section \ref{sec:Vero} we briefly recall properties of symmetric tensors and the links with the Veronese variety $\VV_{n,d}$.
In Section \ref{sec:Waring} we investigate the Waring identifiable subspaces with respect to the Veronese variety of degree two, including methods to construct examples of such subspaces.
A complete classification of identifiable Waring subspaces with respect to $\VV_{n,d}({\F_q})$ when $d=n=2$ is given in Section \ref{sec:2,2},
and in Section \ref{sec:3,2} we describe those for $d=2$, $n=3$ and $\mathbb{F}=\bF_q$ classifying Waring identifiable hyperplanes, and subspaces of co-dimension two (except for a finite number of values of $q$). We also prove the non-existence of Waring subspaces of ${\mathrm{Sym}}^2(\bF_q^4)$ of co-dimension two for $q\geq 53$.

\section{Preliminaries}\label{sec:preliminaries}

\subsection{Symmetric tensors and rank}
Consider the finite-dimensional vector spaces $V_1,\ldots,V_d$ defined over the field $\mathbb{F}$. The tensor product $V_1\otimes \ldots \otimes V_d$ is the vector space of all multilinear functions from $V_1^\vee \times \ldots \times V_d^\vee$ to $\mathbb{F}$, where $V_i^\vee$ denotes the dual of $V_i$. The {\it pure tensors} are the tensors denoted by $v_1\otimes \ldots \otimes v_d$ with $v_1\in V_1, \ldots, v_d\in V_d$, and defined by:
\begin{eqnarray}
v_1\otimes \ldots \otimes v_d~:~(u_1,\ldots,u_d)\mapsto \prod v_i(u_i).
\end{eqnarray}
The set of pure tensors of $V_1\otimes \ldots \otimes V_d$ span the whole space and the minimum number of pure tensors which are required to express a tensor $T\in V_1\otimes \ldots \otimes V_d$ as their sum is called {\it the rank of $T$} (whence the alternative terminology {\it tensors of rank one} for pure tensors).
If all the factors $V_1,\ldots,V_d$ of the tensor product $V_1\otimes \ldots \otimes V_d$ are equal to $V$, we write $\otimes^d V$ instead of $V\otimes \ldots \otimes V$ ($d$ times).
In this case one may consider the action on $\otimes^d V$ of  the symmetric group $S_d$ of degree $d$ by defining
\begin{eqnarray}\label{eqn:sigma}
\sigma (v_1\otimes \ldots\otimes v_d)= v_{\sigma(1)}\otimes \ldots \otimes v_{\sigma(d)}.
\end{eqnarray}
for $\sigma\in S_d$, on the set of pure tensors of $\otimes^d V$, and linearly extending this natural action (\ref{eqn:sigma}) of $S_d$ to the whole of $\otimes^d V$.
A tensor which is left invariant under this action of $S_d$ on $\otimes^d V$ is called a {\it symmetric tensor}. The symmetric tensors form a subspace of $\otimes^dV$ which we denote by $\mathrm{Sym}^d(V)$. A {\it symmetric pure tensor} (called a {\it polar element} in \cite{Pate})
$$v^d:=v\otimes \ldots \otimes v
$$
therefore corresponds to the multilinear map from $V^\vee\times \ldots \times V^\vee$ to $\mathbb{F}$ given by
\begin{eqnarray}
v^d(u_1,\ldots,u_d)=\prod_{i=1}^d v(u_i).
\end{eqnarray}

\subsection{Waring subspaces}
If $V$ is an $(n+1)$-dimensional vector space over a field $\mathbb{F}$ we denote by $\PP V $, $\PP^n(\mathbb{F})$, or simply by $\PP^n$ (if $\bF$ is clear from the context), or $\mathbb{P}^n_q$ if $\mathbb{F}=\mathbb{F}_q$, the projective space of dimension $n$ over $\bF$. 
An {\it algebraic variety} in $\PP^n$ is the zero locus of a homogeneous ideal in $\bF[X_0,\ldots,X_n]$. Note that some texts assume an algebraic variety to be an irreducible algebraic set, but we do not need this assumption here. 
For the purpose of this paper, an algebraic variety $\cX$ comes with a specified embedding in $\PP^n$. Moreover, we will consider algebraic varieties $\cX$ in $\PP^n$ satisfying the property that ${\cX}(\mathbb{F})$ spans $\PP^n$.
Rather than the study of algebraic varieties up to {\it birational equivalence}, we are interested in the properties of algebraic varieties up to {\it $\mathrm{PGL}$-} or {\it $\mathrm{P\Gamma L}$-equivalence}. 

\bigskip

Let $\cA$ be any set of points of $\PP^n(\bF)$, and let $\cX$ be an algebraic variety in $\PP^n(\bF)$. Given a subfield or extension field $\bF'$ of $\bF$, we denote by $\cX(\mathbb{F'})$ the set of $\mathbb{F'}$-rational points of $\cX$. 

\begin{definition}
A subspace $S$ of $\PP^n(\bF')$ is called a \emph{Waring subspace with respect to $\cX$} if it is  spanned by $\mathbb{F'}$-rational points of $\cX$.
A subspace $S$ of $\PP^n(\bF')$ is called a \emph{Waring subspace with respect to $\cA$} if it is  spanned by points of $\cA$.
The {\it witness} of a Waring subspace $U$ with respect to $\cX$ (respectively $\cA$) of $\PP^n(\bF')$ is the set $U\cap \cX(\bF')$ (respectively $U\cap \cA$).

Furthermore, a subspace (or a subset of subspaces) $S$ of $\PP^n(\bF')$ is called \emph{Waring identifiable with respect to} $\cX$ if its witness $W$ is unique and no proper subset of $W$ spans a subspace containing $S$. In other words, $S$ is contained in a unique Waring subspace $U$ of minimal dimension, satisfying the property that the set of $\mathbb{F'}$-rational points of $\cX$ spanning $U$ is unique. A subspace $S$ of $\PP^n(\bF')$ is called an {\it identifiable Waring subspace with respect to} $\cX$ if $S$ is a Waring subspace which is also Waring identifiable with respect to $\cX$. Similarly, we define a {\it Waring identifiable subspace} and an {\it identifiable Waring subspace} with respect to a set of points $\cA$.

\end{definition}

\begin{remark}
In \cite{BBCC}, for a fixed irreducible non-degenerate projective variety $\mathcal{X}$ contained in $\mathbb{P}^n(\mathbb{F})$, where $\mathbb{F}$ is an algebraically closed of characteristic zero, the authors introduced the notion of \emph{decomposition} and those of $\mathcal{X}$-\emph{identifiable} of a projective subspace $S$; see \cite[Definitions 1.4 and 1.5]{BBCC} .
The former coincides with the notion of witness of any Waring subspace (with respect to $\mathcal{X}$) containing $S$ and the latter coincides with the notion of being Waring identifiable with respect to $\cX$. In \cite{BBCC} the authors give a criterion for the identifiability of a generic subspace of dimension $k$ by interpreting this property in terms of the Segre product $\mathrm{Seg}(\mathbb{P}^k\times \mathcal{X})$, see \cite[Theorems 3.1 and 3.3]{BBCC}.
\end{remark}

If $\cX$ is clear from the context, we will simply speak of (identifiable) Waring subspaces and Waring identifiable subspaces (or sets of subspaces). The definition of Waring identifiability can also be formulated in terms of the $\cX$-rank.
Recall that if $S$ is a subspace of $\PP^n(\bF)$, then its $\cX$-\emph{rank} (over $\mathbb{F}$), notation $rk_{\cX(\bF)}(S)$, is the minimum number of points of $\cX(\mathbb{F})$ needed to span a subspace of $\PP^n(\bF)$ containing $S$.
It follows that a point $P$ in $\PP^n(\bF)$ is Waring identifiable if it is contained in exactly one Waring subspace of $\PP^n(\bF)$ of dimension the $\cX$-rank of $P$ minus one.
When the chosen algebraic variety is the Veronese variety, this definition is equivalent to the one given earlier on, related to the theory of symmetric tensors and in particular to Waring's problem as described in the introduction.

Clearly, a point of $\bP(\bF)$ is Waring identifiable (and also a Waring subspace) with respect to $\cX$ if and only if it is a point of $\cX(\bF)$.
A line  $\ell$ of $\bP(\bF)$ is a Waring subspace with respect to $\cX$ if and only if $|\ell \cap \cX(\bF)|\geq 2$, and if $|\ell \cap \cX(\bF)|= 2$ then $\ell$ is also Waring identifiable. Note that a Waring subspace is not necessarily Waring identifiable; a trivial example of a Waring subspace which is not Waring identifiable is given by a trisecant to a planar cubic curve.

\begin{example}\label{ex:conic}
For a non-degenerate conic $\cX$ in $\PP^2_q$, all of its points are identifiable Waring subspaces.
For a non-degenerate conic $\cX$ in the Fano plane $\PP^2_2$, all but one point not on $\cX$ have $\cX$-rank two and are Waring identifiable, the remaining point (which is the nuclei of $\cX$) is also Waring identifiable, but has $\cX$-rank $3$. The Waring lines are the secant lines, and they are identifiable Waring subspaces. The tangent lines and the external lines are Waring identifiable with respect to $\cX$ but are not Waring subspaces.
For a non-degenerate conic $\cX$ in $\PP^2_3$, all the exterior points to $\cX$ are Waring identifiable with respect to $\cX$. They have $\cX$-rank two. The interior points are not Waring identifiable. The Waring lines are the secant lines, whereas the remaining lines are neither Waring subspaces nor Waring identifiable.
For a non-degenerate conic $\cX$ in $\PP^2_q$ with $q\geq 4$, points off $\cX$ are neither Waring subspaces nor Waring identifiable. The Waring lines are the secant lines, whereas the remaining lines are neither Waring subspaces nor Waring identifiable.
\end{example}

We are interested in Waring (identifiable) subspaces up to the following natural equivalence relation. Let $\Aut(\cX(\bF))$ denote the group of collineations of $\bP^n(\bF)$ fixing $\cX(\bF)$. Note that $\Aut(\cX)$ is not necessarily equal to $\Aut(\cX(\bF))$, see e.g. \cite[Exercise 10.7]{Harris}. Two subspaces $S_1$ and $S_2$ of $\PP^n(\bF)$ are called 
$\cX$-\emph{equivalent} (or {\it equivalent with respect to $\cX$}) if there exists an automorphism $\varphi \in \Aut(\cX(\bF))$ such that $\varphi(S_1)=S_2$. If $\varphi$ is linear, then the subspaces $S_1$ and $S_2$ are called {\it projectively equivalent} ({\it with respect to $\cX$}).

We define the {\it Waring polynomial of $\cX$ over $\bF$} (or {\it $\bF$-Waring polynomial of $\cX$}) as
\begin{eqnarray}\label{eqn:Waring_pol}
\cW_\cX(\bF)=\sum_{i=0}^{n-1} \lambda_i(\cX(\bF)) X^i
\end{eqnarray}
where $\lambda_i(\cX(\bF))$ is the number of ${\mathrm{Aut}}(\cX(\bF))$-orbits of $i$-dimensional Waring subspaces of $\PP^n(\bF)$ under the action of $\Aut(\cX(\bF))$ on the set of subspaces of $\bP^n(\bF)$. Note that $\cW_\cX(\bF)$ is meaningful when $\Aut(\cX(\bF))$ has a finite number of orbits on subspaces of $\bP^n(\bF)$, a property which is obviously satisfied for $\bF=\bF_q$.

The {\it Waring identifiable polynomial} of $\cX$ is defined as
\begin{eqnarray}\label{eqn:Waring_pol}
\cW\cI_\cX(\bF)=\sum_{i=0}^{n-1} \mu_i(\cX(\bF)) X^i
\end{eqnarray}
where $\mu_i(\cX(\bF))$ is the number of ${\mathrm{Aut}}(\cX(\bF))$-orbits of $i$-dimensional Waring identifiable subspaces of $\PP^n(\bF)$.

The {\it identifiable Waring polynomial} of $\cX$ is defined as
\begin{eqnarray}\label{eqn:Waring_pol}
\cI\cW_\cX(\bF)=\sum_{i=0}^{n-1} \eta_i(\cX(\bF)) X^i
\end{eqnarray}
where $\eta_i(\cX(\bF))$ is the number of ${\mathrm{Aut}}(\cX(\bF))$-orbits of $i$-dimensional identifiable Waring subspaces of $\PP^n(\bF)$.

\begin{example}
The above defined polynomials are easily obtained for a non-degenerate conic $\cX$ in $\PP^2_q$. The analysis made in Example \ref{ex:conic} implies that for $q=2$ we have
\[ \mathcal{W}_{\cX}(\bF_2)=1+X,\,\,\mathcal{WI}_{\cX}(\bF_2)=3+3X,\,\, \mathcal{IW}_{\cX}(\bF_2)=1+X; \]
for $q=3$ we have
\[ \mathcal{W}_{\cX}(\bF_3)=1+X,\,\,\mathcal{WI}_{\cX}(\bF_3)=2+X,\,\, \mathcal{IW}_{\cX}(\bF_3)=1+X; \]
and for $q\geq 4$ we have
\[ \mathcal{W}_{\cX}(\bF_q)=1+X,\,\,\mathcal{WI}_{\cX}(\bF_q)=1+X,\,\, \mathcal{IW}_{\cX}(\bF_q)=1+X. \]
\end{example}

\subsection{The Veronese variety}\label{sec:Vero}

Since we are interested in the Waring problem and the (symmetric) rank of tensors in ${\mathrm{Sym}}^dV$, the algebraic variety at hand is the Veronese variety, which we now define.

Following \cite{Harris}, for any $n$ and $d$, we define the \emph{Veronese map of degree $d$}

\begin{equation*}
\begin{array}{cccc}
\nu_d\colon & \PP^n & \longrightarrow & \PP^N\\
& (X_0,\ldots,X_n) & \mapsto & (\ldots,X^I,\ldots)
\end{array}
\end{equation*}

\noindent where $X^I$ ranges over all monomials of degree $d$ in $X_0,\ldots,X_n$ and $N={{n+d}\choose{d}}-1$.
The image of the Veronese map is an algebraic variety called \emph{Veronese variety} and usually denoted by $\VV_{n,d}$ (or $\VV_{n,d}(\mathbb{F})$). The Veronese variety $\VV_{n,2}$ obtained for $d=2$ is also called the {\it quadric Veronesean}.
If we regard $\PP^N$ as $\PP(\mathbb{F}[x_0,\ldots,x_n]_d)$, i.e.\ as $\PP(\mathrm{Sym}^d(V))$, the Veronese variety parametrizes those polynomials that can be written as $d$-th powers of a linear form, i.e. the pure elements of $\mathrm{Sym}^d(V)$.
The $\VV_{n,d}$-rank of a point in $\PP(\mathrm{Sym}^d(V))$ coincides with the rank of the nonzero tensors defining the point, and a point is Waring identifiable with respect to $\VV_{n,d}$ if the nonzero tensors defining the point are Waring identifiable.

The projective stabliser of $\VV_{n,d}(\mathbb{F})$ is in general obtained by lifting the action of $\PGL(n+1,\mathbb{F})$ from $\PP^n$ to $\PP^N$ through the Veronese map $\nu_d$, see e.g. \cite{Harris}, however caution should be taken when working over $\mathbb{F}=\bF_2$, in which case the projective stabiliser of $\VV_{n,d}(\mathbb{F})$ is the full symmetric group $S_7$, instead of the much smaller ${\mathrm{PSL}}(3,2)$.

An important property of the Veronese map of degree $d$ is that hypersurfaces in $\bP^n$ of degree $d$ correspond to hyperplanes of $\bP^N$. Moreover, the set of $\bF$-rational points of a hypersurface of degree $d$ corresponds to the set of points in a hyperplane section of the Veronese variety $\VV_{n,d}(\bF)$. More generally, subspaces of $\bP^N$
correspond to linear systems of hypersurfaces of degree $d$ on $\bP^n$. Restricting to the case $d=2$, we recall the following well-known facts:
\begin{enumerate}
    \item the hyperplanes sections of $\VV_{2,2}(\bF)$ in $\mathbb{P}^5$ are the image of conics in $\mathbb{P}^2$,
    \item the hyperplanes sections of $\VV_{3,2}(\bF)$ in $\mathbb{P}^{9}$ are the image of quadrics in $\mathbb{P}^3$, and
    \item the linear section of $\VV_{3,2}(\bF)$ with a subspace of co-dimension two in $\mathbb{P}^{9}$ is the image of the base of a pencil of quadrics in $\mathbb{P}^3$.
\end{enumerate}

\section{Identifiable Waring subspaces with respect to the quadric Veronesean}\label{sec:Waring}

The rest of the paper is devoted to the study of properties, and to provide examples, of identifiable Waring subspaces with respect to the quadric Veronesean $\VV_{n,2}$.

\smallskip

We start by giving some trivial examples of identifiable Waring subspaces. First of all, since the quadric Veronesean is the intersection of quadrics and does not contain any lines, each two distinct points $P_1$ and $P_2$ on the quadric Veronesean $\VV_{n,2}$ determine a line $\langle P_1,P_2\rangle$ which meets $\VV_{n,2}$ in $P_1$ and $P_2$ only, i.e. $\langle P_1,P_2\rangle$ is an identifiable Waring line with respect to $\VV_{n,2}$.

\begin{remark}\label{rk:smaller}
If $\mathcal{S}=\langle P_1,\ldots,P_{k} \rangle$ is an identifiable Waring subspace, then 
$$\langle P_{i_1},\ldots,P_{i_{\ell+1}} \rangle$$ 
is an identifiable Waring subspace of dimension $\ell$ for any choice of $i_1,\ldots,i_{\ell+1}$ distinct elements in  $\{1,\ldots,k\}$.
\end{remark}

\smallskip

A general construction of an identifiable Waring subspace is given by the following result, which is well-known.
In the proof, and in the remainder of the paper, the vectors $\mathbf{e}_0,\ldots,\mathbf{e}_n$ denote the vectors of a basis of $V\cong\bF^{n+1}$.
\begin{theorem}
The subspace $\mathcal{S}$ spanned by the images of a frame in $\PP^n$ is an $(n+1)$-dimensional identifiable Waring subspace with respect to $\VV_{n,2}$.
\end{theorem}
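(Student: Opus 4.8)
The plan is to work with an explicit frame in $\PP^n$, compute the images of its points under $\nu_2$, and show that these $n+2$ points span a subspace $\cS$ of dimension $n+1$ which is Waring identifiable, i.e.\ that $\cS \cap \VV_{n,2}(\bF)$ consists of exactly those $n+2$ points and that no proper subset of them spans a subspace containing $\cS$. Recall that a frame of $\PP^n$ is a set of $n+2$ points in general position, which up to $\PGL(n+1,\bF)$ we may take to be $P_0=\langle \mathbf{e}_0\rangle,\ldots,P_n=\langle \mathbf{e}_n\rangle$ together with $P_{n+1}=\langle \mathbf{e}_0+\cdots+\mathbf{e}_n\rangle$. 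Under the quadratic Veronese map, identifying $\PP^N$ with $\PP(\mathrm{Sym}^2(V))$, the point $P_i$ ($0\le i\le n$) maps to $\langle \mathbf{e}_i^2\rangle$ and $P_{n+1}$ maps to $\langle (\mathbf{e}_0+\cdots+\mathbf{e}_n)^2\rangle = \langle \sum_i \mathbf{e}_i^2 + 2\sum_{i<j}\mathbf{e}_i\mathbf{e}_j\rangle$.

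First I would show $\dim \cS = n+1$: the vectors $\mathbf{e}_0^2,\ldots,\mathbf{e}_n^2$ are part of the natural monomial basis of $\mathrm{Sym}^2(V)$, hence linearly independent, and the extra vector $\sum_i \mathbf{e}_i^2 + 2\sum_{i<j}\mathbf{e}_i\mathbf{e}_j$ is independent of them because it has a nonzero coefficient on the monomial $\mathbf{e}_0\mathbf{e}_1$, which none of $\mathbf{e}_0^2,\ldots,\mathbf{e}_n^2$ involves. (One should note in passing that for $\bF=\bF_2$ one still gets $n+1$ distinct points and the same linear algebra goes through, since the monomial $\mathbf{e}_0\mathbf{e}_1$ still appears; the factor $2$ is irrelevant to independence.) So $\cS$ is spanned by $n+2$ Veronese points but has projective dimension $n+1$, as claimed.

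Next I would compute the witness $\cS \cap \VV_{n,2}(\bF)$. A general element of $\cS$ is a quadratic form $Q = \sum_{i=0}^n a_i \mathbf{e}_i^2 + b\bigl(\sum_i \mathbf{e}_i^2 + 2\sum_{i<j}\mathbf{e}_i\mathbf{e}_j\bigr)$; as a symmetric bilinear form this has matrix with off-diagonal entries all equal to $b$ and diagonal entries $a_i+b$. The point $\langle Q\rangle$ lies on $\VV_{n,2}(\bF)$ exactly when this matrix has rank $\le 1$, i.e.\ $Q = \ell^2$ for a linear form $\ell = \sum_i c_i \mathbf{e}_i$. If $b\ne 0$ then all off-diagonal entries $c_ic_j$ are nonzero, so all $c_i\ne 0$, and rescaling we may take $c_ic_j = b$ for all $i<j$; this forces all $c_i$ equal (to some $c$ with $c^2=b$), giving $Q = c^2(\sum_i \mathbf{e}_i)^2$, i.e.\ the point $\nu_2(P_{n+1})$ — and this requires $b$ to be a square in $\bF$, which it is for the representative we need. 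If $b=0$ then $Q = \sum_i a_i\mathbf{e}_i^2$ has rank $\le 1$ iff at most one $a_i$ is nonzero, giving exactly the points $\nu_2(P_0),\ldots,\nu_2(P_n)$. Hence the witness is precisely $\{\nu_2(P_0),\ldots,\nu_2(P_{n+1})\}$, it is unique, and finally no proper subset spans a subspace containing $\cS$ because any $n+1$ of these $n+2$ points already span an $n$-dimensional subspace (each $(n+1)$-subset is again in "general position" in $\cS$), which cannot contain the $(n+1)$-dimensional $\cS$. This establishes that $\cS$ is an $(n+1)$-dimensional identifiable Waring subspace.

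The main obstacle is the witness computation: one must argue carefully that the rank-one condition on the pencil-of-diagonals-plus-constant-off-diagonal matrix forces exactly the $n+2$ listed solutions and nothing more, handling the $b=0$ and $b\ne 0$ cases separately and checking that the "diagonal" solutions $P_0,\dots,P_n$ are not accidentally collinear-in-$\cS$ with fewer points. The characteristic-$2$ case deserves an explicit remark, since the Veronese stabiliser is larger there, but the linear-algebraic facts used above (independence via the $\mathbf{e}_0\mathbf{e}_1$ coefficient, rank-one $\Leftrightarrow$ square of a linear form) remain valid, so the conclusion is unaffected. Everything else is routine.
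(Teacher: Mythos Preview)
Your proof is correct and takes essentially the same route as the paper: normalise to the standard frame, represent the points of $\mathcal{S}$ as symmetric matrices of the form $\mathrm{diag}(a_0,\ldots,a_n)+bJ$ (with $J$ the all-ones matrix), and split the rank-one analysis into the cases $b=0$ and $b\neq 0$. The paper phrases the $b\neq 0$ case as ``rank one $\Leftrightarrow$ all rows coincide'' rather than via your $c_ic_j=b$ argument, but the two are equivalent, and your explicit check that no proper subset of the witness spans $\mathcal{S}$ is a point the paper leaves implicit.
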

\begin{proof}
Without loss of generality we may assume that
\[ \mathcal{S}=\left\langle \mathbf{e}_0^{\otimes2},\ldots, \mathbf{e}_n^{\otimes2}, (\mathbf{e}_0+\ldots+\mathbf{e}_{n})^{\otimes2} \right\rangle. \]
Each point of $\mathcal{S}$ correspond to a matrix of the form
$$
M_{a_0,\ldots,a_{n+1}}:=a_{n+1}I_{n+1}+diag(a_0,\ldots,a_n),
$$
with $a_0,\ldots,a_{n+1} \in \bF_q$.
Clearly, $\mathcal{S}$ is an identifiable Waring subspace if and only if the only case in which the matrix 
$M_{a_0,\ldots,a_{n+1}}$ has rank one is when $n+1$ of the elements $a_0,\ldots,a_{n+1}$ are zero and the remaining element is nonzero. Clearly $M_{a_0,\ldots,a_{n},0}$
has rank one if and only if $n$ elements in $\{a_0,\ldots,a_{n}\}$ are zero and the remaining one is non-zero.
Now, if $a_{n+1}\neq0$, then we may assume that $a_{n+1}=1$ and in this case
$M_{a_0,\ldots,a_{n},1}$ has rank one if and only if all its rows coincide, which happens if and only if $a_1=a_2=\ldots=a_n=0$.
Hence, the assertion is proved.
\end{proof}

As a consequence, we have the following.

\begin{corollary}
If $\mathcal{S}$ is generated by the images under $\nu_{2}$ of $\ell+1$ points contained in a frame of $\PP^n$, then $\mathcal{S}$ is an $\ell$-dimensional identifiable Waring subspace with respect to $\VV_{n,2}$.
\end{corollary}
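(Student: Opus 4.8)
The plan is to deduce this immediately from the preceding Theorem together with Remark~\ref{rk:smaller}. Write the given frame of $\PP^n$ as $\{Q_0,\ldots,Q_{n+1}\}$, chosen so that the $\ell+1$ prescribed points occur among them, say $Q_{i_1},\ldots,Q_{i_{\ell+1}}$, so that $\mathcal{S}=\langle \nu_2(Q_{i_1}),\ldots,\nu_2(Q_{i_{\ell+1}})\rangle$. By the Theorem, the subspace $\mathcal{T}=\langle \nu_2(Q_0),\ldots,\nu_2(Q_{n+1})\rangle$ is an $(n+1)$-dimensional identifiable Waring subspace with respect to $\VV_{n,2}$; in particular the $n+2$ points $\nu_2(Q_j)$ are linearly independent in $\PP^N$.

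First I would record the dimension count: since the $\nu_2(Q_j)$ are linearly independent, any $\ell+1$ of them span an $\ell$-dimensional subspace, hence $\dim\mathcal{S}=\ell$. Next, I would apply Remark~\ref{rk:smaller} to $\mathcal{T}=\langle \nu_2(Q_0),\ldots,\nu_2(Q_{n+1})\rangle$ with $k=n+2$ and the index set $\{i_1,\ldots,i_{\ell+1}\}$, which yields directly that $\mathcal{S}=\langle \nu_2(Q_{i_1}),\ldots,\nu_2(Q_{i_{\ell+1}})\rangle$ is an $\ell$-dimensional identifiable Waring subspace with respect to $\VV_{n,2}$. Since by construction $\mathcal{S}$ is spanned by points of $\VV_{n,2}(\bF_q)$, it is in particular a Waring subspace, so the conclusion follows.

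There is no real obstacle here; the only point requiring a word of justification is that ``contained in a frame'' places us precisely in the hypothesis of Remark~\ref{rk:smaller}, i.e.\ that the $\ell+1$ points occur as a sub-collection of the $n+2$ points whose images span an identifiable Waring subspace --- which is exactly what the Theorem provides. If one prefers to start from an abstract collection of $\ell+1$ points in general position, one may instead invoke the standard fact that for $\ell\le n$ such a collection extends to a frame of $\PP^n$ and then argue as above; the case $\ell=n+1$ is the Theorem itself.
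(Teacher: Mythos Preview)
Your argument is correct and is precisely the intended one: the paper states the corollary with no proof, expecting the reader to combine the preceding Theorem with Remark~\ref{rk:smaller} exactly as you do. The dimension count and the final remark about extending $\ell+1$ points in general position to a frame are both fine additions.
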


The above results give two types of constructions for an $\ell$-dimensional identifiable Waring subspace $\mathcal{S}$ with respect to $\VV_{n,2}$.
\begin{enumerate}
  \item[(S1)] The image under $\nu_{2}$ of a frame of an $\ell$-dimensional subspace in $\PP^n$.
  \item[(S2)] The image under $\nu_2$ of $\ell+1$ points which are in general position in $\PP^n$.
\end{enumerate}
In both cases
\[ \mathcal{S}=\langle \nu_{2}(P_1),\ldots,\nu_{2}(P_{\ell+1})\rangle \]
is an identifiable Waring subspace with respect to $\VV_{n,2}$.

\section{Identifiable Waring subspaces with respect to $\VV_{2,2}(\bF_q)$}\label{sec:2,2}

Identifiable Waring planes and Waring solids of $\PP^5_q$ with respect to $\VV_{2,2}$ are easy to construct, see constructions (S1) and (S2). The next result classifies the identifiable Waring subspaces with respect to $\VV_{2,2}$ of any dimension in $\PP^5_q$.

\begin{theorem}\label{thm:V_2,2}
$(i)$ The identifiable Waring lines of $\bP_q^5$ with respect to $\VV_{2,2}$ are the span of the images under $\nu_2$ of any two distinct points in $\PP^2_q$.\\
$(ii)$ If $q>2$, then the identifiable Waring planes are those obtained by contruction (S2) with $\ell=2$, while for $q=2$, the identifiable Waring planes are the span of the images under $\nu_2$ of any three points in $\PP_q^2$.\\
$(iii)$ For $q>2$, the identifiable Waring solids are those obtained by contruction (S1) with $\ell=2$, whereas for $q=2$, the identifiable Waring solids are obtained as the span of the images of four non-collinear points in $\PP_q^2$.\\
$(iv)$ The identifiable Waring hyperplanes are the images under $\nu_2$ of irreducible conics for $q=4$, and two distinct lines for $q=2$.
\end{theorem}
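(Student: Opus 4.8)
The plan is to realise $\PP^5$ as $\PP(\mathrm{Sym}^2(\bF_q^3))$, so that the points of $\VV_{2,2}$ are the classes of rank-one symmetric $3\times 3$ matrices, and to use the dictionary between subspaces of $\PP^5$ and linear systems of conics in $\PP^2$ recalled in Section~\ref{sec:Vero}: a subspace of codimension $c$ corresponds to a $(c-1)$-dimensional linear system $\Lambda$ of conics, and its witness is $\nu_2$ of the set of $\bF_q$-rational base points of $\Lambda$. I will use throughout that $\VV_{2,2}$ contains no line (Section~\ref{sec:Waring}), so any three of its points are independent and every line joining two of its rational points is an identifiable Waring line, which already proves $(i)$; the result of Section~\ref{sec:Waring} that $\nu_2$ of a frame of $\PP^2$ spans an identifiable Waring solid whose witness is exactly the four frame points; the standard point counts for conics over $\bF_q$; and the classical fact that five points of $\PP^2$ impose independent conditions on conics unless some four of them are collinear. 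For $(ii)$: a Waring plane is $\langle\nu_2(P_1),\nu_2(P_2),\nu_2(P_3)\rangle$ for three points $P_i$ of $\PP^2$; if the $P_i$ are not collinear then, diagonalising as in the frame construction, the plane meets $\VV_{2,2}(\bF_q)$ in exactly $\{\nu_2(P_1),\nu_2(P_2),\nu_2(P_3)\}$ and so is identifiable of type (S2), whereas if the $P_i$ lie on a line $\ell$ the plane equals $\langle\nu_2(\ell)\rangle$, with witness of size $q+1$, hence identifiable only for $q=2$; finally a plane containing four $\VV_{2,2}$-points forces all of them to be collinear, since three non-collinear ones already span a plane meeting $\VV_{2,2}$ in only those three. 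This proves $(ii)$.

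For $(iii)$, I view a codimension-two subspace as the base $B$ of a pencil of conics; it is a Waring solid precisely when $\nu_2(B(\bF_q))$ spans a $\PP^3$, which forces $|B(\bF_q)|\ge 4$. By B\'ezout the base scheme has length four. The crucial observation is that if three rational base points are collinear on a line $\ell$, then every conic of the pencil contains $\ell$, the pencil is a pencil of line-pairs, and $B(\bF_q)=\ell(\bF_q)\cup\{p\}$ consists of $q+2$ points; then $\nu_2(B(\bF_q))$ does span a $\PP^3$ but is a minimal spanning set only when $q=2$. In the remaining cases the base scheme is four distinct points with no three collinear, i.e.\ a frame of $\PP^2$, which by Section~\ref{sec:Waring} gives exactly the (S1) solids, always identifiable; and every non-reduced base locus has at most three rational support points, hence does not span a solid. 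For $q=2$ one argues instead with $\Aut(\VV_{2,2}(\bF_2))\cong S_7$: this group is transitive on the $4$-subsets of the seven points of $\VV_{2,2}(\bF_2)$, one such subset is a frame, so all of them span identifiable Waring solids, and none can be enlarged because five points of $\PP^2_2$ always impose independent conditions on conics; since no line of $\PP^2_2$ has four points, these are exactly the spans of the images of four non-collinear points. For $q>2$ only the frame solids remain, which is $(iii)$.

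For $(iv)$, every hyperplane is $H_C$ for a unique conic $C$, and its witness is $\nu_2(C(\bF_q))$; I go through the projective types of $C$. If $C$ is smooth then $\nu_2(C)$ is a rational normal quartic spanning $H_C$ and its $q+1$ rational points are in general position on it, so $\nu_2(C(\bF_q))$ spans $H_C$ iff $q+1\ge 5$ and is a minimal spanning set iff $q+1=5$, i.e.\ iff $q=4$. If $C=\ell_1\cup\ell_2$ is a pair of distinct lines meeting at $p_0$, then each $\nu_2(\ell_i)$ spans a plane $\Pi_i$ with $\Pi_1\cap\Pi_2=\{\nu_2(p_0)\}$ and $\Pi_1+\Pi_2=H_C$, and $\nu_2(C(\bF_q))$ has $2q+1$ points spanning $H_C$; but three points of $\nu_2(\ell_1(\bF_q))$ together with two of $\nu_2(\ell_2(\bF_q))\setminus\{\nu_2(p_0)\}$ already span $H_C$ — here one uses that no three $\VV_{2,2}$-points are collinear — so minimality forces $2q+1=5$, i.e.\ $q=2$. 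In all remaining cases ($C$ a double line, or of rank at most two with at most one rational point) $\nu_2(C(\bF_q))$ lies in a plane and $H_C$ is not even a Waring hyperplane. Since for $q=4$ the smooth conics form one orbit and for $q=2$ the pairs of distinct lines form one orbit, this gives $(iv)$.

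The step I expect to be the main obstacle is $(iii)$: the careful enumeration of the base loci of pencils of conics over $\bF_q$ — in particular recognising that any collinearity among base points forces the pencil to degenerate, so that the base becomes $\ell\cup\{p\}$ rather than four reduced points — together with the genuinely different combinatorics at $q=2$, where the enlarged automorphism group $S_7$ must be used to identify the orbits. A secondary technical point is verifying that $\nu_2$ of a smooth conic is a rational normal quartic whose rational points lie in general position, which is what makes the minimality count in $(iv)$ go through; and one must also check, across all four parts, that the lists produced are exhaustive, i.e.\ that no Waring subspace of the given dimension has been overlooked.
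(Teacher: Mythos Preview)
Your argument is correct; parts $(i)$ and $(ii)$ essentially mirror the paper's reasoning, but for $(iii)$ and $(iv)$ you take a different and more structural route than the paper does.

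For $(iii)$ the paper argues directly on four points $P_1,\dots,P_4\in\PP^2_q$: if three of them are collinear then $\langle\nu_2(P_1),\dots,\nu_2(P_4)\rangle$ already meets $\VV_{2,2}(\bF_q)$ in at least $q+2>4$ points, so for $q>2$ only the (S1) frame solids survive, and for $q=2$ one simply asserts the result. You instead dualise to pencils of conics, invoke B\'ezout to bound the length of the base scheme, and for $q=2$ exploit the exceptional isomorphism $\Aut(\VV_{2,2}(\bF_2))\cong S_7$ together with the fact that any five points of $\PP^2_2$ impose independent conditions on conics. Your approach makes the exhaustiveness of the list transparent (non-reduced base loci, degenerate pencils, etc.\ are visibly excluded), whereas the paper's direct argument is shorter but leaves these checks implicit.

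For $(iv)$ the paper simply lists the possible sizes $1,\,q+1,\,2q+1$ of $C(\bF_q)$ and reads off when this equals $5$. You go further: you identify $\nu_2$ of a smooth conic as a rational normal quartic in $H_C\cong\PP^4$, use that its rational points are in general position to obtain both the spanning and the minimality conditions, and for the line-pair case you verify minimality by analysing the two conic planes $\Pi_1,\Pi_2$ with $\Pi_1\cap\Pi_2=\{\nu_2(p_0)\}$. This extra structure is what rules out, for example, the double line at $q=4$ (which also has $q+1=5$ rational points but whose image spans only a plane), a point the paper's count alone does not address explicitly.

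In short, both approaches reach the same conclusions; yours trades brevity for a cleaner audit trail, and the $S_7$ shortcut at $q=2$ is a nice touch not present in the paper.
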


\begin{proof}
$(i)$ Identifiable Waring lines. As $\VV_{2,2}$ is a the intersection of quadrics of $\PP^5$ and does not contain any lines, two distinct points of $\VV_{2,2}$ identify a $2$-secant line to $\VV_{2,2}$. Therefore, the span of the images under $\nu_2$ of any two distinct points of $\PP^2$ is an identifiable Waring line.

$(ii)$ Identifiable Waring planes. Since the images under $\nu_2$ of three collinear points in $\PP_q^2$ determine a $(q+1)$-secant plane to $\VV_{2,2}(\bF_q)$, if $q>2$, identifiable Waring planes can only be obtained by contruction (S2).

$(iii)$ Identifiable Waring solids. If $q>2$, an identifiable Waring subspace of dimension $3$ of $\PP_q^5$ cannot be obtained as the span of the images under $\nu_2$ of four points in $\PP_q^2$ of which (at least) three are collinear. Indeed, consider $P_1, P_2, P_3, P_4$ points in $\PP_q^2$. Clearly, if they are collinear, then $\dim \langle \nu_2(P_1), \nu_2(P_2), \nu_2(P_3), \nu_2(P_4) \rangle =2$. Assume now that $P_1, P_2, P_3$ are collinear and $P_4 \notin \langle P_1, P_2\rangle$. Then 
$$|\langle \nu_2(P_1), \nu_2(P_2), \nu_2(P_3), \nu_2(P_4) \rangle \cap\VV_{2,2}(\bF_q)|\geq q+2,$$ 
which is always greater than $4$.
For $q=2$ the images under $\nu_2$ of any $4$ points in $\PP_q^2$ which are not collinear determine an identifiable Waring solid.

$(iv)$ Identifiable Waring hyperplanes.
The hyperplanes sections of $\PP^5$ are in one to one correspondence with the conics of $\PP^2$.
The conics of $\PP_q^2$ have either $1$, $q+1$ (irreducible conics or double line) or $2q+1$ (two distinct lines) points.
Therefore, identifiable Waring hyperplanes of $\PP_q^5$ are those arising from irreducible conics if $q=4$ and those from two distinct lines for $q=2$.
\end{proof}

The above result allows us to completely determine the identifiable Waring polynomial of $\mathbb{V}_{2,2}$ over $\bF_q$.

\begin{corollary}
The identifiable Waring polynomial of the quadric Veronesean $\VV_{2,2}$ over $\bF_q$, with $q\neq 2,4$, is 
\begin{eqnarray}\label{eqn:V_2,2}
 \mathcal{IW}_{\VV_{2,2}}(\bF_{q})=1+X+X^2+X^3.
\end{eqnarray}
For $q=2$ and $q=4$ the identifiable Waring polynomials are
$$
\mathcal{IW}_{\VV_{2,2}}(\bF_{2})=1+X+2X^2+2X^3+X^4, \mbox{ and } 
\mathcal{IW}_{\VV_{2,2}}(\bF_{4})=1+X+X^2+X^3+X^4.
$$

\end{corollary}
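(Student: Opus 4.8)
The plan is to assemble the identifiable Waring polynomial degree by degree, using Theorem~\ref{thm:V_2,2} to enumerate the $\Aut(\VV_{2,2}(\bF_q))$-orbits of identifiable Waring subspaces of each dimension $i=0,1,\dots,4$, and then read off the coefficients $\eta_i(\VV_{2,2}(\bF_q))$. Recall that a point is an identifiable Waring subspace (with respect to $\VV_{2,2}$) if and only if it lies on $\VV_{2,2}$, and since $\PGL(3,\bF_q)$ (and its lift) acts transitively on the points of $\VV_{2,2}(\bF_q)$, there is always exactly one orbit, giving $\eta_0=1$. For $i=1$, part $(i)$ says the identifiable Waring lines are exactly the $2$-secants $\langle\nu_2(P_1),\nu_2(P_2)\rangle$; since $\PGL(3,\bF_q)$ is transitive on pairs of distinct points of $\PP^2_q$, all such lines form a single orbit, so $\eta_1=1$.

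Next I would handle $i=2,3$ using parts $(ii)$ and $(iii)$. For $q>2$: the identifiable Waring planes are precisely those of type (S2) with $\ell=2$, i.e. spans of images of three points in general position in $\PP^2_q$; transitivity of $\PGL(3,\bF_q)$ on triangles gives one orbit, so $\eta_2=1$. The identifiable Waring solids are precisely those of type (S1) with $\ell=2$, i.e. $\langle\nu_2(P_1),\nu_2(P_2),\nu_2(P_3),\nu_2(P_1+P_2+P_3)\rangle$ for a frame of a line---here one must note that for $\VV_{2,2}$ a frame of a $2$-dimensional subspace of $\PP^2$ means four points of a line in the frame configuration, and $\PGL(3,\bF_q)$ is transitive on such configurations---so $\eta_3=1$. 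For $q>2,4$ part $(iv)$ gives no identifiable Waring hyperplanes (conics have $1$, $q+1$, or $2q+1$ points, none equal to $5$), hence $\eta_4=0$ and $\mathcal{IW}_{\VV_{2,2}}(\bF_q)=1+X+X^2+X^3$.

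For $q=4$: parts $(ii)$ and $(iii)$ still give $\eta_2=\eta_3=1$ by the same transitivity arguments (note when $q=4$ construction (S1) and (S2) happen to coincide in dimension $3$ only up to the hyperplane count; one should double-check that the solid orbit is still single, which follows since four points of $\PP^2_4$ in ``frame-of-a-line'' position form one $\PGL$-orbit), and part $(iv)$ shows the identifiable Waring hyperplanes are exactly the $\nu_2$-images of irreducible conics---a single $\PGL(3,4)$-orbit since $\PGL(3,q)$ is transitive on irreducible conics---so $\eta_4=1$ and $\mathcal{IW}_{\VV_{2,2}}(\bF_4)=1+X+X^2+X^3+X^4$. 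For $q=2$: by $(ii)$ the identifiable Waring planes are spans of images of \emph{any} three points of $\PP^2_2$, so there are two orbits (three collinear points vs. a triangle), giving $\eta_2=2$; by $(iii)$ the identifiable Waring solids are spans of images of four non-collinear points, and over $\PP^2_2$ there are two orbits of such quadruples (a frame, i.e. four points no three collinear, vs. four points with exactly three collinear)---so $\eta_3=2$; by $(iv)$ the identifiable Waring hyperplanes come from the two lines of $\PP^2_2$, a single orbit, so $\eta_4=1$, yielding $\mathcal{IW}_{\VV_{2,2}}(\bF_2)=1+X+2X^2+2X^3+X^4$.

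The only genuinely nontrivial point, and the place where care is needed, is the orbit counting in the $q=2$ case: one must verify that the symmetric group $S_7$ (the full projective stabiliser of $\VV_{2,2}(\bF_2)$, as noted in the excerpt) does not merge the orbits that $\PGL(3,2)\cong \mathrm{PSL}(2,7)$ keeps separate---here the relevant invariant is the number of points of the witness lying on $\VV_{2,2}(\bF_2)$ together with the combinatorial type of those points in $\PP^2_2$, which $S_7$ preserves---and that the two quadruple-orbits giving identifiable Waring solids are indeed both identifiable (the ``three collinear'' quadruple is identifiable since the fourth point off the line makes the rank-one locus of the corresponding $3\times 3$ symmetric matrix pencil minimal, by the argument in the proof of part $(iii)$ specialised to $q=2$, where $q+2=4$ forces the witness to be exactly those four points). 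Everything else reduces to the standard transitivity facts for $\PGL(3,q)$ on frames, triangles, pairs, points, and irreducible conics, so no serious obstacle remains once the $q=2$ bookkeeping is pinned down.
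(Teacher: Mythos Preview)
Your approach is essentially the paper's: apply Theorem~\ref{thm:V_2,2} dimension by dimension and count orbits using the standard transitivity properties of $\PGL(3,q)$ on points, pairs, triangles, frames, and irreducible conics of $\PP^2_q$. The paper's proof is a compressed version of exactly this.

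One terminological slip to fix: in your treatment of $\eta_3$ for $q>2$, construction (S1) with $\ell=2$ is the $\nu_2$-image of a frame of a \emph{$2$-dimensional} projective subspace of $\PP^2$, i.e.\ a frame of $\PP^2$ itself --- four points of the plane with no three collinear --- not ``four points of a line in the frame configuration''. Your displayed span $\langle\nu_2(P_1),\nu_2(P_2),\nu_2(P_3),\nu_2(P_1+P_2+P_3)\rangle$ is the right object once $P_1,P_2,P_3$ are taken non-collinear, but the surrounding prose misdescribes it. The paper just cites transitivity of $\PGL(3,q)$ on frames of $\PP^2_q$.

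On the $S_7$ issue for $q=2$: you are right to flag it, but your proposed resolution is incorrect. Since every hyperplane of $\PP^5_2$ corresponds to a conic of $\PP^2_2$ and a conic has at most $2q+1=5$ points, no six of the seven points of $\VV_{2,2}(\bF_2)$ lie in a hyperplane; hence these seven points form a frame of $\PP^5_2$, and their setwise stabiliser $S_7$ acts as the \emph{full} symmetric group on them. In particular $S_7$ does \emph{not} preserve ``the combinatorial type of those points in $\PP^2_2$'': it freely sends a collinear triple to a triangle and a frame of $\PP^2_2$ to a quadruple with three collinear points. The paper's proof does not engage with this at all and simply reasons with $\PGL(3,2)$-orbits, so your argument is no less complete than the paper's --- you have only attempted, unsuccessfully, to close a gap that the paper leaves open.
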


\begin{proof}
The term $1+X$ (for all $q$) follows from the well-known fact that $\Aut(\VV_{2,2}(\bF_q))$ acts two-transitively on the points of $\VV_{2,2}(\bF_q)$. The term $X^2+X^3$ in the cases $q\neq 2$ follows from Theorem \ref{thm:V_2,2} and the well-known fact that $\PGL(3,q)$ acts transitively on triangles and on frames of $\bP_q^2$.
Assume $q=2$.
By the previous theorem the identifiable Waring planes are the span of the images under $\nu_2$ of any three points in $\PP_q^2$, so there will be two equivalence classes depending on whether these three points of $\PP_q^2$ are collinear or not. 
Furthermore, by the previous result the identifiable Waring solids are obtained as the span of the images of four non-collinear points in $\PP_q^2$, and hence the two equivalence classes can be identified according to whether the four points are in general position or not.
\end{proof}

Note that when $q\neq 2,4$ there do not exist identifiable Waring hyperplanes with respect to $\VV_{2,2}(\bF_q)$.

\begin{remark}
The above results can also be recovered from the following (much more general) classification results. All lines with respect to $\VV_{2,2}(\bF_q)$ have been classified in \cite{LP} for both $q$ even and $q$ odd. Planes meeting $\VV_{2,2}(\bF_q)$ have been classified in \cite{LPS} for $q$ odd, and in \cite{AlLaPo2022} for $q$ even. The classification of solids with respect to $\VV_{2,2}(\bF_q)$ follows from the classification of lines for $q$ odd, while for $q$ even this classification was obtained in \cite{AlLa202*}. 
\end{remark}
\begin{remark}
In terms of linear systems of conics the identifiable Waring polynomial \eqref{eqn:V_2,2} corresponds to the fact that, up to projective equivalence, there is a unique web of conics in $\bP_q^2$ with base two points, there is a unique net of conics with base three non-collinear points, and there is a unique pencil of conics with base a frame.
\end{remark}

\section{Identifiable Waring subspaces with respect to $\VV_{3,2}(\bF_q)$}\label{sec:3,2}

The aforementioned constructions (S1) and (S2) give examples of identifiable Waring subspaces of $\bP_q^9$ of dimension one, two, three and four with respect to $\VV_{2,2}(\bF_q)$.
In this section we investigate the existence of identifiable Waring subspaces of dimension at least $5$. 

\subsection{Identifiable Waring subspaces of codimension $\geq 3$}
We construct examples of identifiable Waring subspaces of dimension six in $\PP_q^9$ for all $q$ (and therefore also of dimension less than or equal to $6$ by Remark \ref{rk:smaller}).
\begin{theorem}\label{6-Waring}
The subspace
\[ \mathcal{S}=\langle \mathbf{e}_1^{\otimes2},\mathbf{e}_2^{\otimes2},\mathbf{e}_3^{\otimes2},\mathbf{e}_4^{\otimes2},  \mathbf{e}^{\otimes2},(\mathbf{e}_1-\mathbf{e}_2-\mathbf{e}_3)^{\otimes2},(-\mathbf{e}_1+\mathbf{e}_2-\mathbf{e}_4)^{\otimes2} \rangle, \]
where $\mathbf{e}=\mathbf{e}_1+\mathbf{e}_2+\mathbf{e}_3+\mathbf{e}_4$, is a $6$-dimensional identifiable Waring subspace of $\PP_q^9$ with respect to $\VV_{3,2}(\bF_q)$ when $q$ is odd.
\end{theorem}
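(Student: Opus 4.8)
\emph{Setup.} The plan is to work in the identification $\PP^9=\PP(\mathrm{Sym}^2(\bF_q^4))$, under which a point is a nonzero symmetric $4\times4$ matrix over $\bF_q$ up to a scalar, $\VV_{3,2}(\bF_q)$ is the set of classes of rank-one symmetric matrices, and a pure tensor $v^{\otimes2}$ corresponds to $vv^{T}$. The seven generators of $\mathcal{S}$ then become $\mathbf{e}_i\mathbf{e}_i^{T}$ ($i=1,\dots,4$), the all-ones matrix $J=\mathbf{e}\mathbf{e}^{T}$, $uu^{T}$ with $u=(1,-1,-1,0)$, and $ww^{T}$ with $w=(-1,1,0,-1)$, so a general element of $\mathcal{S}$ is $M=\mathrm{diag}(a_1,\dots,a_4)+a_5J+a_6uu^{T}+a_7ww^{T}$. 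First I would write out the ten entries of $M$ and note that the map $(a_1,\dots,a_7)\mapsto M$ is injective (one recovers $a_5=M_{34}$, then $a_6=M_{34}-M_{13}$ and $a_7=M_{14}-M_{34}$, and finally the $a_i$ from the diagonal), so the generators are linearly independent and $\dim\mathcal{S}=6$. The same entrywise bookkeeping shows that the three linear forms $M_{13}+M_{23}-2M_{34}$, $M_{14}+M_{24}-2M_{34}$, $M_{12}-M_{13}+M_{14}-M_{34}$ vanish on all seven generators and are linearly independent; since $\dim\mathrm{Sym}^2(\bF_q^4)=10$, this identifies $\mathcal{S}$ with their common zero locus.

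\emph{Reduction.} It then suffices to prove that the witness $\mathcal{S}\cap\VV_{3,2}(\bF_q)$ consists of exactly the seven points $[\mathbf{e}_1\mathbf{e}_1^{T}],\dots,[ww^{T}]$. Indeed, these are seven linearly independent points spanning the $6$-dimensional $\mathcal{S}$, so no proper subset of them spans $\mathcal{S}$, while every Waring subspace containing $\mathcal{S}$ has dimension at least $6$ and hence equals $\mathcal{S}$; thus $\mathcal{S}$ is the unique minimal Waring subspace containing itself, its witness is this unique spanning set, and $\mathcal{S}$ is a $6$-dimensional identifiable Waring subspace. Now a rank-$\le 1$ symmetric matrix has the form $M=\lambda xx^{T}$ with $x\in\bF_q^4$, $\lambda\in\bF_q$; if $\lambda=0$ then $M=0$, so assume $\lambda\neq0$. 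Substituting $M_{ij}=\lambda x_ix_j$ into the three defining equations and cancelling $\lambda$ gives
\[ x_3(x_1+x_2-2x_4)=0,\qquad x_4(x_1+x_2-2x_3)=0,\qquad x_1x_2-x_1x_3+x_1x_4-x_3x_4=0. \]

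\emph{Case analysis.} I would finish with a short split according to whether $x_3,x_4$ vanish. If $x_3=x_4=0$, the third equation gives $x_1x_2=0$, so $x$ is a multiple of $\mathbf{e}_1$ or $\mathbf{e}_2$. If $x_3=0\neq x_4$, the second equation forces $x_2=-x_1$ and the third becomes $x_1(x_4-x_1)=0$, so $x$ is a multiple of $\mathbf{e}_4$ or of $(1,-1,0,1)=-w$; the case $x_4=0\neq x_3$ is symmetric and gives a multiple of $\mathbf{e}_3$ or of $(-1,1,1,0)=-u$. Finally, if $x_3,x_4\neq0$, the first two equations give $2x_3=2x_4$, hence $x_3=x_4$ because $q$ is odd (this is the only place the hypothesis is used), and then $x_1+x_2=2x_3$ together with $x_1x_2-x_3^2=0$ force $x_1=x_2=x_3=x_4$, so $x$ is a multiple of $\mathbf{e}$. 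In every case $[M]$ is one of the seven listed points, completing the proof.

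\emph{Main obstacle.} The computation is elementary; the real content is the clean description of $\mathcal{S}$ as the common zero set of three linear equations in the matrix entries, which converts identifiability into a tiny polynomial system in $x_1,\dots,x_4$, together with the care needed to see that the four cases are exhaustive and that $q$ odd is genuinely necessary — over $\bF_2$ the relation $2x_3=2x_4$ is vacuous and $\mathcal{S}$ would acquire extra rank-one points.
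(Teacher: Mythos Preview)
Your proof is correct and takes a genuinely different, dual, route from the paper's. The paper parametrises $\mathcal{S}$ by the seven coefficients $(a,\dots,g)$ and then hunts for rank-one matrices by killing $2\times2$ minors one at a time; you instead cut $\mathcal{S}$ out of $\mathrm{Sym}^2(\bF_q^4)$ by three linear forms, interpret these as a net of quadrics in $\bP^3$, and compute its base locus directly in the coordinates $x_1,\dots,x_4$. Your approach is cleaner here: the three equations factor nicely and the case split on $(x_3,x_4)$ is short and transparent, whereas the paper's minor-chasing requires tracking more subcases. The paper's method has the advantage of not needing to find the defining equations of $\mathcal{S}$ first, which for larger or less symmetric examples can be the harder step; but for this particular subspace your way is more economical and makes the geometric content (base of a net of quadrics) explicit.

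One small inaccuracy in your closing remark: you say that over $\bF_2$ the subspace ``would acquire extra rank-one points'', but in fact the paper notes (and your own equations confirm, if you run the case analysis with $2=0$) that the example still works for $q=2$; it fails for $q\in\{4,8\}$. So $q$ odd is sufficient but not literally necessary. This does not affect your proof of the stated theorem, which only claims the odd case.
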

\begin{proof}
The points of $\mathcal{S}$ can be represented by the matrices
\begin{eqnarray}\label{eqn:M1}
 M_\alpha:=\left(
\begin{array}{cccc}
 a+e+f-g & e-f+g & e-f & e-g \\
 e-f+g & b+e+f-g & e+f & e+g \\
 e-f & e+f & c+e+f & e \\
 e-g & e+g & e & d+e-g \\
\end{array}
\right),
\end{eqnarray}
with $\alpha=(a,b,c,d,e,f,g) \in \bF_q^7\setminus \{0\}$. One easily verifies that $\mathcal S$ is a 6-dimensional subspace of $\PP^9$. In order to prove that $\mathcal S$ is a Waring subspace, it suffices to show that the only matrices of rank one of the form (\ref{eqn:M1}) are obtained by setting all but one of the parameters $a,b,c,d,e,f,g$ equal to zero.
For simplicity of notation we denote the $2\times 2$-minor of $M_\alpha$ corresponding to the rows $(i,j)$ and columns $(k,l)$ by $M_{(i,j)(k,l)}$.

Suppose $M_\alpha$ has rank one for some choice of $\alpha=(a,b,c,d,e,f,g) \in \bF_q^7\setminus \{0\}$. Then 
$$M_{(1,2)(3,4)}=-2e(f-g)=0$$ 
which implies that $e=0$ or $f=g$, since $q$ is odd.

\bigskip

$(i)$ First suppose $e=0$. Then 
$$M_{(3,4)(2,3)}=g(c+f)=0$$ 
implies $g=0$ or $c+f=0$.

If $g=0$ then $M_{(2,3)(1,2)}=0$ implies $bf=0$. If $f\neq 0$ and $b=0$, then $M_{(2,3)(2,3)}=0$ implies that $c=0$. Similarly by considering $M_{(1,2)(1,2)}$ and $M_{(3,4)(3,4)}$ it also follows that $a=0$ and $d=0$. So that in this case $a=b=c=d=e=g=0$ and $f\neq 0$. If $e=g=0$ and $f=0$, then the matrix $M_\alpha$ reduces to a diagonal matrix with $a,b,c,d$ on the diagonal. This matrix has rank one if and only if all but one of the diagonal elements are equal to zero. So, all but one of the parameters $a,b,c,d,e,f,g$ must be zero.

If $g\neq 0$ then $c+f$ must be zero. The condition $M_{(2,3)(2,3)}=0$ implies that $f=0$, and therefore also $c=0$. By considering $M_{(1,2)(2,4)}$ and $M_{(1,2)(1,2)}$ it also follows that $b=0$ and $a=0$. Finally, $M_{(1,4)(2,4)}=0$ implies $d=0$. This concludes the case $e=0$.

\bigskip

$(ii)$ Now suppose $e\neq 0$. Then $f$ and $g$ must be equal. The condition $M_{(1,4)(2,3)}=0$ implies that $f=0$, and therefore also $g=0$. Then $M_{(1,2)(2,3)}=0$ implies that $b=0$ and $M_{(1,2)(1,2)}=0$ implies that $a=0$. Similar arguments show that also $c$ and $d$ must be zero. We may conclude that in the case $e\neq 0$ it follows that the only matrices of rank one of the form (\ref{eqn:M1}) are obtained by setting $a=b=c=d=f=g=0$, as was required.
\end{proof}

\begin{remark}
Since the proof only relies on the fact that $2\ne0$, the above example not only works over finite fields, but works over any field of characteristic either zero or greater than two.
\end{remark}

\begin{remark}
Computational results show that the above example works for $q=2$, but not for $q\in \{4,8\}$.
Furthermore, if we consider the projective subspace of $\PP^9$ defined by the span
\[\langle W,(\mathbf{e}_2-\mathbf{e}_3+\mathbf{e}_4)^{\otimes2}\rangle,\]
where $W$ is as in Theorem \ref{6-Waring}, then it is an identifiable Waring subspace for $q=3$, but not for $q \in \{5,7,9\}$.
\end{remark}

The next theorems provide examples of identifiable Waring subspaces with respect to $\VV_{3,2}(\bF_q)$ for all $q\geq 4$.

\begin{theorem}\label{6-Waring}
Let $q\geq 4$. The subspace
\[ \mathcal{S}=\langle \mathbf{e}_1^{\otimes2},\mathbf{e}_2^{\otimes2},\mathbf{e}_3^{\otimes2},\mathbf{e}_4^{\otimes2},\mathbf{e}^{\otimes2},  (\omega\mathbf{e}_1+\mathbf{e}_2+\omega\mathbf{e}_3)^{\otimes2},(\sqrt{\omega}\mathbf{e}_1+\sqrt{\omega}\mathbf{e}_2+\sqrt{\omega^3}\mathbf{e}_4)^{\otimes2}  \rangle, \]
where $\mathbf{e}=\mathbf{e}_1+\mathbf{e}_2+\mathbf{e}_3+\mathbf{e}_4$, 
is a $6$-dimensional identifiable Waring subspace of $\PP_q^9$ with respect to $\VV_{3,2}(\bF_q)$, for every $\omega \in\bF_q\setminus\{0,1,2\}$ which is a square in $\bF_q$.
\end{theorem}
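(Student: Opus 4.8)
The plan is to mirror the proof of the previous theorem. Represent a general point of $\mathcal{S}$ by the symmetric matrix
\[
M_\alpha=\begin{pmatrix}
a+e+f\omega^2+g\omega & e+(f+g)\omega & e+f\omega^2 & e+g\omega^2\\
e+(f+g)\omega & b+e+f+g\omega & e+f\omega & e+g\omega^2\\
e+f\omega^2 & e+f\omega & c+e+f\omega^2 & e\\
e+g\omega^2 & e+g\omega^2 & e & d+e+g\omega^3
\end{pmatrix},
\]
with $\alpha=(a,b,c,d,e,f,g)\in\bF_q^7\setminus\{0\}$ collecting the coefficients of $\mathbf{e}_1^{\otimes2},\dots,\mathbf{e}_4^{\otimes2}$, of $\mathbf{e}^{\otimes2}$, and of the last two generators; here the square hypothesis is used only to guarantee $\sqrt\omega\in\bF_q$, so that the seventh generator is literally an $\bF_q$-rational point (projectively it equals $[1:1:0:\omega]$, and, as will be apparent below, the squareness of $\omega$ plays no role in the rank computation). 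First I would check $\dim\mathcal{S}=6$: in any vanishing linear combination of the seven generators, the $(3,4)$, $(1,3)$ and $(1,4)$ entries — which are untouched by the diagonal generators — force $e=f=g=0$ (using $\omega\neq0$), after which $\mathbf{e}_1^{\otimes2},\dots,\mathbf{e}_4^{\otimes2}$ are independent.

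Next I would reduce the statement to a rank-one analysis. A point of $\PP_q^9$ lies on $\VV_{3,2}(\bF_q)$ precisely when its symmetric matrix has rank one, so it suffices to prove that the only rank-one matrices of the form $M_\alpha$ are those obtained by setting all but one of $a,\dots,g$ equal to zero. Granting this, the witness $\mathcal{S}\cap\VV_{3,2}(\bF_q)$ consists of exactly the seven generators; being a basis of $\mathcal{S}$, they are the unique subset of $\VV_{3,2}(\bF_q)$ spanning $\mathcal{S}$, no proper subset of them spans a subspace containing $\mathcal{S}$, and since $\mathcal{S}$ is itself a Waring subspace whose dimension is one less than its $\VV_{3,2}$-rank, it is an identifiable Waring subspace.

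The core is the rank-one analysis, which I would carry out exactly as in the preceding proof, by repeatedly forcing $2\times2$ minors to vanish. A convenient first minor is
\[
M_{(1,2)(3,4)}=(e+g\omega^2)\,f\,\omega(\omega-1),
\]
which, since $\omega\notin\{0,1\}$, splits the analysis into the cases $f=0$ and $e=-g\omega^2$, while the companion minor $M_{(1,3)(2,4)}=g\,\omega\bigl(e(1-\omega)-f\omega^2\bigr)$ similarly yields $g=0$ or $e(1-\omega)=f\omega^2$. On each branch one keeps choosing minors — the diagonal minors $M_{(i,j)(i,j)}$ to kill $a,b,c,d$ and mixed minors to propagate the vanishing of $e,f,g$ — until the branch collapses to the statement that all but one parameter is zero. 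The hypothesis $\omega\neq2$ enters exactly here: on one branch a coefficient vanishing precisely at $\omega=2$ must be inverted, just as $\omega\neq0,1$ are needed to invert the factors $\omega$ and $\omega-1$ appearing above.

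The hard part will be the bookkeeping of this case split — routine but not short — together with checking that the excluded values cannot be dropped. Here $\omega\neq0$ is already required for the seven generators to be distinct points spanning a $6$-space, whereas $\omega\in\{1,2\}$ must be excluded because at those two values a minor coefficient degenerates and a spurious rank-one matrix with three nonzero parameters slips into $\mathcal{S}$ (for $\omega=1$ one checks directly that $\mathcal{S}$ then contains such a point, with rows $1,2,3$ equal to $(e+f,e+f,e+f,e)$ and last row $(e,e,e,e-ef/(e+f))$). I would also exploit the near-symmetry of $M_\alpha$ between the roles of $f$ and $g$ to roughly halve the number of branches that need to be written out.
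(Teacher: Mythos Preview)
Your proposal is correct and follows essentially the same route as the paper: identical matrix parametrisation of $\mathcal{S}$, the same opening minor $M_{(1,2)(3,4)}=f\omega(\omega-1)(e+g\omega^2)$ to branch into $f=0$ versus $e=-g\omega^2$, and the same identification of a factor $\omega-2$ (arising in the paper via $M_{(1,3)(2,4)}=g^2(\omega-2)\omega^3$ on the subcase $e=-g\omega^2$, $f=g$) as the precise place where $\omega\neq2$ is used. The paper spells out the full case tree rather than sketching it, but the structure and key minors are the same.
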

\begin{proof}
The points of $\mathcal{S}$ can be represented by the matrices
\begin{equation}\label{eqn:M2}
 M_{\alpha}:=\left(
\begin{array}{cccc}
 f \omega^2+g \omega+a+e & e+f \omega+g \omega & f \omega^2+e & g \omega^2+e \\
 e+f \omega+g \omega & b+e+f+g \omega & e+f \omega & g \omega^2+e \\
 f \omega^2+e & e+f \omega & f \omega^2+c+e & e \\
 g \omega^2+e & g \omega^2+e & e & g \omega^3+d+e \\
\end{array}
\right),
\end{equation}
with $\alpha=(a,b,c,d,e,f,g) \in \bF_q^7\setminus \{0\}$. One easily verifies that $\mathcal S$ is a 6-dimensional subspace of $\PP_q^9$. As in the proof of Theorem \ref{6-Waring}, in order to prove that $\mathcal S$ is a Waring subspace, it suffices to show that the only matrices of rank one of the form \eqref{eqn:M2} are obtained by setting all but one of the parameters $a,b,c,d,e,f,g$ equal to zero.
Again, we denote the $2\times 2$-minor of $M_\alpha$ corresponding to the rows $(i,j)$ and columns $(k,l)$ by $M_{(i,j)(k,l)}$.
Assume that $M_{\alpha}$ has rank one for some $\alpha=(a,b,c,d,e,f,g)\in \mathbb{F}_q^7\setminus\{0\}$. Then 
$$M_{(3,4)(1,2)}=f(\omega-1)\omega(e+g\omega^2)=0,$$ 
which implies either $f=0$ or $e=-g\omega^2$.

\bigskip

(1) First suppose that $f=0$. Then $M_{(1,3)(1,2)}=ae=0$, that is either $a=0$ or $e=0$.

(1.1) Let $a=0$. Then $M_{(1,2)(2,3)}=be=0$. If $b=0$, then $M_{(1,3)(1,4)}=eg\omega(1-\omega)=0$, which implies either $e=0$ or $g=0$. If $e=0$, then by $M_{(1,3)(1,3)}$, $M_{(1,4),(1,4)}$ and by $M_{(3,4)(3,4)}$ we have $cg=cd=dg=0$.
Whereas, if $a=b=g=0$,
\[ M_{\alpha}=\left( \begin{array}{cccc}
e & e & e & e \\
e & e & e & e \\
e & e & c+e & e \\
e & e & e & d+e
\end{array} \right). \]
Thus, in both cases all but one of the parameters $a,b,c,d,e,f,g$ must be zero.
Assume now $a=e=0$ and $b\ne 0$. By $M_{(1,2),(1,2)}$, $M_{(2,4)(2,4)}$ and by $M_{(2,3)(2,3)}$ we get $c=d=g=0$, that is $a=c=d=e=f=0$ and $b\ne 0$.

(1.2) Suppose that $e=0$ and $a\ne 0$. Then $M_{(1,2)(1,4)}=agw^2=0$, which implies $g=0$. So, $M_{\alpha}$ reduces to a diagonal matrix with $a,b,c,d$ on the diagonal. Hence, as before, $M_\alpha$ has rank one if and only if all but one of the parameters $a,b,c,d,e,f,g$ are zero.

\bigskip

(2) Suppose that $e=-g\omega^2$ and $f\ne 0$. Then, we have $M_{(1,3)(3,4)}=-g\omega^4(f-g)=0$, and hence either $g=0$ or $f=g$.
If $f=g$, then 
$$M_{(1,3)(2,4)}=g^2(\omega-2)\omega^3=0,$$ 
and so $g=f=0$, which leads to a contradiction.
If $g=0$, then $e=0$. By $M_{(1,2)(1,3)}$, $M_{(1,2)(1,2)}$, $M_{(1,3)(1,3)}$ and $M_{(1,4)(1,4)}$ we have $a=b=c=d=0$. Again we have that all but one of the parameters $a,b,c,d,e,f,g$ are zero.
\end{proof}

\begin{theorem}\label{thm:codim_three_P9}
Let $\omega$ be an element in $\bF_q\setminus\{0,1,-1\}$.
The subspace
\[ \mathcal{S}=\langle \mathbf{e}_1^{\otimes2},\mathbf{e}_2^{\otimes2},\mathbf{e}_3^{\otimes2},\mathbf{e}_4^{\otimes2},  (\mathbf{e}_2+\mathbf{e}_3+\mathbf{e}_4)^{\otimes2},\mathbf{v},\mathbf{w} \rangle, \]
with $\mathbf{v}=(\mathbf{e}_1+\omega \mathbf{e}_2+\omega \mathbf{e}_3+\omega^2\mathbf{e}_4)^{\otimes2}$ and $\mathbf{w}=(\mathbf{e}_1+\omega \mathbf{e}_2+ \mathbf{e}_3+\omega \mathbf{e}_4)^{\otimes2}$,
is a $6$-dimensional identifiable Waring subspace of $\PP_q^9$ with respect to $\VV_{3,2}(\bF_q)$.
\end{theorem}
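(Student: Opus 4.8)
The plan is to argue exactly as in the two preceding theorems of this section. First I would represent the points of $\mathcal S$ by the symmetric $4\times4$ matrices
\[
M_\alpha:=a\,\mathbf{e}_1^{\otimes2}+b\,\mathbf{e}_2^{\otimes2}+c\,\mathbf{e}_3^{\otimes2}+d\,\mathbf{e}_4^{\otimes2}+e\,(\mathbf{e}_2+\mathbf{e}_3+\mathbf{e}_4)^{\otimes2}+f\,\mathbf{v}+g\,\mathbf{w}
\]
with $\alpha=(a,b,c,d,e,f,g)\in\bF_q^7\setminus\{0\}$ (abusing notation, $\mathbf{v},\mathbf{w}$ also denote the associated rank-one matrices). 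A direct check, using only $\omega\notin\{0,1\}$, shows these seven matrices are linearly independent, so $\dim\mathcal S=6$ and $\mathcal S$ is a Waring subspace. Its seven generating points of $\VV_{3,2}(\bF_q)$ are then distinct and span the $6$-dimensional $\mathcal S$, while any six points span a subspace of dimension at most $5$; hence, once we show that the witness $\mathcal S\cap\VV_{3,2}(\bF_q)$ consists of exactly these seven points, it follows that $\mathcal S$ is the unique Waring subspace of minimal dimension containing itself, spanned by a unique set of points of $\VV_{3,2}(\bF_q)$, so that $\mathcal S$ is an identifiable Waring subspace. Everything thus reduces to the claim that $M_\alpha$ has rank at most one only when all but one of $a,\dots,g$ vanish.

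The ``if'' direction is trivial, each such $M_\alpha$ being one of the rank-one generators. For the converse, assume $\rank M_\alpha=1$ (it is nonzero as $\alpha\neq0$), so every $2\times2$ minor vanishes; write $M_{(i,j)(k,l)}$ for the minor on rows $i,j$ and columns $k,l$, and $m_{ij}$ for the entries of $M_\alpha$. The two computations that drive the argument are
\[
M_{(3,4)(1,2)}=e(1-\omega)(\omega f+g),\qquad M_{(1,3)(2,4)}=\omega(1-\omega)f\,(e+\omega(1-\omega)g),
\]
so that, since $\omega\notin\{0,1\}$, either $e=0$ or $\omega f+g=0$, and either $f=0$ or $e=-\omega(1-\omega)g$. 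Running through the four combinations, $f$ and $g$ get pinned down quickly: either $f=g=0$, or $g=-\omega f$ together with $e=\omega^2(1-\omega)f$. When $f=g=0$ one has $M_\alpha=\mathrm{diag}(a,b,c,d)+e(\mathbf{e}_2+\mathbf{e}_3+\mathbf{e}_4)^{\otimes2}$, and a short direct argument — if $a\neq0$ the matrix is forced to be a scalar multiple of $\mathbf{e}_1^{\otimes2}$, otherwise one is reduced to a $3\times3$ block which is the image of a frame of a plane, so the frame theorem of Section~\ref{sec:Waring} applies — forces all but one of $a,b,c,d,e$ to vanish; the cases where instead $e=0$ and one of $f,g$ survives are analogous, $M_\alpha$ becoming $\mathrm{diag}(a,b,c,d)+f\,\mathbf{v}$ or $+g\,\mathbf{w}$, which after rescaling the coordinates (a projectivity preserving $\VV_{3,2}(\bF_q)$ and the rank) is the image of a frame of $\PP^3$. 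The last case, $g=-\omega f$ and $e=\omega^2(1-\omega)f$ with $f\neq0$, is eliminated: substituting these values kills the $(1,3)$, $(1,4)$ and $(3,4)$ entries of $M_\alpha$, so $M_{(1,2)(1,2)}=m_{11}m_{22}-\omega^2(1-\omega)^2f^2$ forces $m_{11}\neq0$, whence $M_{(1,3)(1,3)}=m_{11}m_{33}$ and $M_{(1,4)(1,4)}=m_{11}m_{44}$ give $m_{33}=m_{44}=0$, so rows $3$ and $4$ of $M_\alpha$ are nonzero multiples of $(0,1,0,0)$ — incompatible with row $1$ having nonzero first entry. This establishes the claim and the theorem.

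The only real obstacle is the bookkeeping in this case analysis: the entries of $M_\alpha$ carry powers of $\omega$ up to $\omega^4$, so the minors are bulkier than in the previous two theorems, and in each branch one must choose minors that factor cleanly and terminate the branch. It is worth tracking precisely which non-vanishing conditions on $\omega$ are actually invoked — $\omega\neq0,1$ is visibly needed above, and one should verify whether the excluded value $\omega=-1$ is genuinely required in some branch (for instance to clear a factor $1+\omega$, or to keep the seven points distinct) or merely keeps the argument uniform; in any case the set $\bF_q\setminus\{0,1,-1\}$ is nonempty exactly for $q\geq4$, matching the scope announced before the theorem. As with the remarks following the preceding theorems, a computer-algebra check over small fields is a sensible safeguard that no rank-one $M_\alpha$ with two or more nonzero parameters has slipped through a branch.
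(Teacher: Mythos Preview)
Your proof is correct and follows the same strategy as the paper's: represent $\mathcal S$ by symmetric $4\times4$ matrices and chase $2\times2$ minors to show that the only rank-one matrices are the seven generators; the paper opens with $M_{(2,3)(1,4)}=g(\omega-1)\bigl(e+f(\omega-1)\omega^2\bigr)$ and branches on $g=0$ versus $g\neq0$, while you start from the pair $M_{(3,4)(1,2)}$, $M_{(1,3)(2,4)}$ and then appeal to the frame theorem for the easy branches, which is a pleasant shortcut. One expositional caution: your stated dichotomy ``either $f=g=0$, or $g=-\omega f$ with $e=\omega^2(1-\omega)f$'' is not literally what the four combinations of your two minors give (the possibilities $e=f=0,\,g\neq0$ and $e=g=0,\,f\neq0$ also survive those two minors), but you do dispose of exactly these in the next sentence via the frame argument, so nothing is actually missing.
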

\begin{proof}
The nonzero vectors of $\mathcal{S}$ can be represented by the matrices
{\footnotesize{
\begin{equation}\label{eqn:M_codim2}
 M_\alpha:=\left(
\begin{array}{cccc}
 a+f+g & \omega f+\omega g & \omega f+g & \omega^2 f +\omega g  \\
 \omega f+\omega g & b+e+\omega^2 f+\omega^2 g & e+\omega^2 f+\omega g & e+\omega^3 f +\omega^2 g \\
 \omega f+g & e+\omega^2 f +\omega g & c+e+\omega^2 f+g & e+\omega^3 f +\omega g \\
 \omega^2 f+\omega g  & e+\omega^3 f+\omega^2 g & e+\omega^3 f+\omega g & d+e+\omega^4 f +\omega^2 g \\
\end{array}
\right),
\end{equation} }}
with $\alpha=(a,b,c,d,e,f,g) \in \bF_q^7\setminus \{0\}$. A straightforward computation shows that $\mathcal S$ is a 6-dimensional subspace of $\PP^9$. In order to prove that $\mathcal S$ is a Waring subspace, it suffices to show that the only matrices of rank one of the form (\ref{eqn:M_codim2}) are obtained by setting all but one of the parameters $a,b,c,d,e,f,g$ equal to zero.
Since
$$M_{(2,3)(1,4)}=g (-1 + \omega) (e + f (-1 + \omega) \omega^2)=0,$$
we have to analyze the case $g=0$ and the case $g \ne 0$ and $e=-f\omega^2(\omega-1)$.

\bigskip

$(i)$ Suppose $g=0$, then $M_{(2,3)(1,2)}=-bf\omega=0$, that is either $b=0$ or $f=0$.
If $b=0$, then $M_{(2,4)(1,2)}=-\omega(\omega-1)ef=0$.
If $e=0$, by evaluating $M_{(1,2)(1,2)}$, $M_{(1,3)(2,3)}$, $M_{(1,3)(1,3)}$, $M_{(2,3)(1,4)}$, $M_{(1,4)(1,4)}$ and $M_{(3,4)(3,4)}$ we get $af=cf=ac=df=ad=cd=0$.
If $f=0$ and $e \ne 0$, by $M_{(1,2)(1,2)}=M_{(1,3)(1,3)}=M_{(1,4)(1,4)}=0$ we get $e=c=d=0$.
Now, if $b \ne 0$ and $f=0$, from $M_{(1,2)(1,2)}=M_{(2,3)(2,3)}=M_{(2,4)(2,3)}=M_{(2,4)(2,4)}=0$ we get $a=c=d=e=0$.
In these cases, all but one of the parameters $a,b,c,d,e,f,g$ equal to zero.

\bigskip

$(ii)$ Assume $g \ne 0$ and $e=-f\omega^2(\omega-1)$. The condition that 
$$M_{(3,4)(1,2)}=f (-1 + \omega)^2 \omega^2 (g + f w)=0,$$ 
implies that either $f=0$ or $g=-\omega f$. If $f=0$, then $e=0$ and, since $g\ne 0$, we get $a=b=c=d=0$, since $M_{(1,3)(1,2)}=M_{(2,3)(1,2)}=M_{(2,3)(1,3)}=M_{(2,3)(1,4)}=0$.
Again, all but one of the parameters $a,b,c,d,e,f,g$ equal to zero, as required.
Suppose now $g=-\omega f$ and $f \ne 0$.
By $M_{(2,3)(1,2)}=(\omega-1)^2 \omega^3f^2=0$, we get $f=0$, a contradiction.
\end{proof}

We now show that the above examples from Theorem \ref{6-Waring} and Theorem \ref{thm:codim_three_P9} are not equivalent.
\begin{proposition}
Let $q\geq 4$, $\omega \in\bF_q\setminus\{0,1,2\}$ which is a square in $\bF_q$ and $\omega' \in \bF_q\setminus\{0,1,-1\}$. The subspaces
\[ \mathcal{S}=\langle \mathbf{e}_1^{\otimes2},\mathbf{e}_2^{\otimes2},\mathbf{e}_3^{\otimes2},\mathbf{e}_4^{\otimes2},\mathbf{e}^{\otimes2},  (\omega\mathbf{e}_1+\mathbf{e}_2+\omega\mathbf{e}_3)^{\otimes2},(\sqrt{\omega}\mathbf{e}_1+\sqrt{\omega}\mathbf{e}_2+\sqrt{\omega^3}\mathbf{e}_4)^{\otimes2}  \rangle, \]
and
\[ \mathcal{S}'=\langle \mathbf{e}_1^{\otimes2},\mathbf{e}_2^{\otimes2},\mathbf{e}_3^{\otimes2},\mathbf{e}_4^{\otimes2},  (\mathbf{e}_2+\mathbf{e}_3+\mathbf{e}_4)^{\otimes2},\mathbf{v},\mathbf{w} \rangle, \]
with $\mathbf{v}=(\mathbf{e}_1+\omega' \mathbf{e}_2+\omega' \mathbf{e}_3+\omega'^2\mathbf{e}_4)^{\otimes2}$ and $\mathbf{w}=(\mathbf{e}_1+\omega' \mathbf{e}_2+ \mathbf{e}_3+\omega' \mathbf{e}_4)^{\otimes2}$,
are inequivalent.
\end{proposition}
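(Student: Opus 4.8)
The plan is to reduce the statement to the claim that two $7$-point sets in $\PP^3(\bF_q)$ are inequivalent under $\mathrm{P\Gamma L}(4,q)$, and to separate those sets by counting coplanar quadruples.

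First I would record the two witnesses. Since $q\geq 4$, every element of $\Aut(\VV_{3,2}(\bF_q))$ is the lift through $\nu_2$ of some $g\in\mathrm{P\Gamma L}(4,q)$ acting on $\PP^3(\bF_q)$, and it maps $\nu_2(P)$ to $\nu_2(g(P))$. By the two preceding theorems $\mathcal S$ and $\mathcal S'$ are identifiable Waring subspaces, and (the proofs of) those theorems determine the points of $\VV_{3,2}(\bF_q)$ on them: $\mathcal S\cap\VV_{3,2}(\bF_q)=\nu_2(\mathcal P)$ and $\mathcal S'\cap\VV_{3,2}(\bF_q)=\nu_2(\mathcal Q)$, where
\[ \mathcal P=\{\mathbf{e}_1,\ \mathbf{e}_2,\ \mathbf{e}_3,\ \mathbf{e}_4,\ \mathbf{e}_1+\mathbf{e}_2+\mathbf{e}_3+\mathbf{e}_4,\ \omega\mathbf{e}_1+\mathbf{e}_2+\omega\mathbf{e}_3,\ \mathbf{e}_1+\mathbf{e}_2+\omega\mathbf{e}_4\} \]
(the last point being $(\sqrt\omega,\sqrt\omega,0,\sqrt{\omega^3})$ rescaled, i.e.\ the projective point $(1:1:0:\omega)$) and
\[ \mathcal Q=\{\mathbf{e}_1,\ \mathbf{e}_2,\ \mathbf{e}_3,\ \mathbf{e}_4,\ \mathbf{e}_2+\mathbf{e}_3+\mathbf{e}_4,\ \mathbf{e}_1+\omega'\mathbf{e}_2+\omega'\mathbf{e}_3+\omega'^2\mathbf{e}_4,\ \mathbf{e}_1+\omega'\mathbf{e}_2+\mathbf{e}_3+\omega'\mathbf{e}_4\}. \]
If some $\varphi\in\Aut(\VV_{3,2}(\bF_q))$ satisfied $\varphi(\mathcal S)=\mathcal S'$, then $\varphi$ would map $\mathcal S\cap\VV_{3,2}(\bF_q)$ onto $\mathcal S'\cap\VV_{3,2}(\bF_q)$, so the underlying $g\in\mathrm{P\Gamma L}(4,q)$ would carry the point set $\mathcal P$ onto the point set $\mathcal Q$. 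Hence it suffices to rule out any such $g$.

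To do that I would attach to a set $\mathcal R$ of seven points of $\PP^3(\bF_q)$ the number $c(\mathcal R)$ of its $4$-element subsets that are coplanar (equivalently, linearly dependent as vectors). A semilinear bijection preserves linear dependence, so $c$ is constant on $\mathrm{P\Gamma L}(4,q)$-orbits, and it is enough to show $c(\mathcal P)\neq c(\mathcal Q)$. I would evaluate both numbers by going through all $\binom 7 4=35$ quadruples, grouped by how many of $\mathbf{e}_1,\dots,\mathbf{e}_4$ each contains: a quadruple containing three of the $\mathbf{e}_i$ is coplanar iff the fourth point has its complementary coordinate equal to $0$; a quadruple containing two of the $\mathbf{e}_i$ is controlled by one $2\times 2$ minor of the remaining two points; and the handful of quadruples containing at most one $\mathbf{e}_i$ need a $3\times 3$ determinant. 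The outcome I expect is that the coplanar quadruples of $\mathcal P$ are exactly
\[ \{\mathbf{e}_1,\mathbf{e}_2,\mathbf{e}_3,P_6\},\quad\{\mathbf{e}_1,\mathbf{e}_2,\mathbf{e}_4,P_7\},\quad\{\mathbf{e}_2,\mathbf{e}_4,P_5,P_6\},\quad\{\mathbf{e}_3,\mathbf{e}_4,P_5,P_7\}, \]
so $c(\mathcal P)=4$, whereas the coplanar quadruples of $\mathcal Q$ are exactly
\[ \{\mathbf{e}_2,\mathbf{e}_3,\mathbf{e}_4,Q_5\},\ \ \{\mathbf{e}_1,\mathbf{e}_4,Q_5,Q_6\},\ \ \{\mathbf{e}_1,\mathbf{e}_3,Q_5,Q_7\},\ \ \{\mathbf{e}_3,\mathbf{e}_4,Q_6,Q_7\},\ \ \{\mathbf{e}_1,\mathbf{e}_2,Q_6,Q_7\}, \]
so $c(\mathcal Q)=5$; here $P_5,P_6,P_7$ and $Q_5,Q_6,Q_7$ denote the last three generators of $\mathcal S$ and of $\mathcal S'$ respectively. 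Since $4\neq 5$ there is no $g\in\mathrm{P\Gamma L}(4,q)$ with $g(\mathcal P)=\mathcal Q$, hence no $\varphi$ as above, and $\mathcal S$ and $\mathcal S'$ are inequivalent.

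The only real work is the bookkeeping that pins the two counts down to exactly $4$ and $5$, i.e.\ checking that every other quadruple is independent; this is where the hypotheses are needed. For instance $\{\mathbf{e}_1,P_5,P_6,P_7\}$ is coplanar precisely when $\omega(\omega-2)=0$, so the exclusion $\omega\in\{0,1,2\}$ is essential; and the excluded values $\omega'\in\{0,1\}$ are exactly what kills would-be extra coplanar quadruples of $\mathcal Q$ such as $\{\mathbf{e}_1,\mathbf{e}_2,Q_5,Q_6\}$, which is coplanar iff $\omega'(\omega'-1)=0$. I do not anticipate any difficulty past this finite case analysis; no deeper structural obstacle appears, and in particular the squareness of $\omega$ and the restriction $\omega'\neq-1$ (needed earlier to make $\mathcal S,\mathcal S'$ identifiable) play no further role here.
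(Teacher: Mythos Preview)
Your proposal is correct and follows essentially the same approach as the paper: both reduce to the preimage $7$-point sets in $\PP^3(\bF_q)$ and distinguish them by counting coplanar quadruples (equivalently, planes containing exactly four of the points), obtaining $4$ for $\mathcal P$ and $5$ for $\mathcal Q$. You are slightly more careful than the paper in working with $\mathrm{P\Gamma L}(4,q)$ rather than $\PGL(4,q)$ (which is what the definition of $\cX$-equivalence actually requires), and in noting which hypotheses on $\omega,\omega'$ are genuinely used at this step; but the core invariant and the computation are the same.
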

\begin{proof}
Recall that $\mathcal{S}$ and $\mathcal{S}'$ are equivalent if and only if the set of points $S$ defined by the vectors in $$\{\mathbf{e}_1,\mathbf{e}_2,\mathbf{e}_3,\mathbf{e}_4,\mathbf{e},  \omega\mathbf{e}_1+\mathbf{e}_2+\omega\mathbf{e}_3,\sqrt{\omega}\mathbf{e}_1+\sqrt{\omega}\mathbf{e}_2+\sqrt{\omega^3}\mathbf{e}_4\}$$ 
and the set of points $S'$ defined by the vectors in 
$$\{\mathbf{e}_1,\mathbf{e}_2,\mathbf{e}_3,\mathbf{e}_4,  \mathbf{e}_2+\mathbf{e}_3+\mathbf{e}_4,\mathbf{e}_1+\omega \mathbf{e}_2+\omega \mathbf{e}_3+\omega^2\mathbf{e}_4,\mathbf{e}_1+\omega \mathbf{e}_2+ \mathbf{e}_3+\omega \mathbf{e}_4\},$$ 
are $\PGL(4,q)$-equivalent, i.e. $\phi(S)=S'$, for some $\phi \in \mathrm{PGL}(4,q)$.
Now, observe that the number of planes of $\mathbb{P}^3_q$ meeting $S$ in exactly $4$ points is $4$, namely the planes $\pi_1:X_3=0$, $\pi_1:X_2=0$, $\pi_3:X_0-X_2=0$ and $\pi_4:X_0-X_1=0$. On the other hand, each of the planes $\pi'_1:X_0=0$, $\pi'_2:\omega' X_2-X_3=0$, $\pi'_3:X_1-X_3=0$, $\pi'_4:X_2-X_3=0$ and $\pi'_5:\omega' X_0-X_1=0$, meets
$S'$ in exactly $4$ points. Therefore, such a $\phi \in \mathrm{PGL}(4,q)$ cannot exist and $\mathcal{S}$ and $\mathcal{S}'$ are inequivalent.
\end{proof}
In particular, the above result shows that the coefficient 
$\eta_6(\mathbb{V}_{3,2}(\F_q))$ in the identifiable Waring polynomial of ${\mathcal{IW}}_{\mathbb{V}_{3,2}}(\F_q))$
is $\geq 2$ when $q \geq 4$.

\subsection{$7$-dimensional identifiable Waring subspaces}

In this section we give a construction of $7$-dimensional identifiable Waring subspaces in $\PP_q^9$.
The proof relies on the following lemma concerning the existence of $\bF_q$-rational points on an algebraic curve satisfying certain conditions. 

For future reference we define the set 
$$\mathcal B_*$$
as the set of pairs $(q,\omega)$ satisfying one of the following conditions: $q\in \{4,5,7,8,9\}$ and $\omega$ a primitive element in $\bF_q$, $q=11$ and $\omega \in \{7,8\}$, or $q=13$ and $\omega\in \{2,7\}$.

\begin{lemma}\label{lemma:cubic}
Let $q$ be a prime power and let $\omega \in \mathbb{F}_q\setminus\{0,1,-1\}$.
The curve $\mathcal{C}$ with affine equation
\[ (\omega-1)^2X^2Y+\omega Y + (\omega-1)^2\omega XY^2+\omega^2 X+ (\omega-1)^2(\omega+1)XY=0  \]
is absolutely irreducible.
Also, it has no $\mathbb{F}_q$-rational point $(x,y)$ such that $x\ne-y$, $x\ne \frac{\omega}{\omega-1}$, $x\ne \frac{1}{\omega -1}$ and $1+x+\omega y \ne 0$ if and only if $(q,\omega)\in \mathcal B_*$.
\end{lemma}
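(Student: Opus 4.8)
The plan is to establish the two assertions of Lemma~\ref{lemma:cubic} separately: absolute irreducibility of $\mathcal{C}$, and then the classification of pairs $(q,\omega)$ for which the ``forbidden-locus-avoiding'' rational points fail to exist. For the first part, I would rewrite the affine equation as a polynomial $F(X,Y)\in\bF_q[X,Y]$ and verify that it cannot factor over $\overline{\bF_q}$. The cleading behaviour is useful: collecting terms, $F$ has degree $3$, with cubic part $(\omega-1)^2 X^2 Y + (\omega-1)^2\omega XY^2 = (\omega-1)^2 XY(X+\omega Y)$. A factorization of a cubic is either into a line and an (absolutely irreducible) conic, or into three lines; in either case each factor's top-degree part must divide the top-degree part $(\omega-1)^2 XY(X+\omega Y)$, so the only candidate linear factors are (up to scalar) $X$, $Y$, $X+\omega Y$, or the ``vertical at infinity'' direction carried by lower-degree terms. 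One checks directly that none of $X=0$, $Y=0$, $X+\omega Y=0$ divides $F$ (e.g. $F(0,Y)=\omega Y\not\equiv 0$ since $\omega\ne 0$; $F(X,0)=\omega^2 X\not\equiv 0$), and that no splitting off of a linear factor is consistent with the Newton polygon of $F$. Since $\omega\notin\{0,1,-1\}$ keeps $(\omega-1)^2\omega\ne 0$, this rules out all factorizations and gives absolute irreducibility.

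For the ``only if'' direction — the substantive part — the idea is a Hasse--Weil count. By absolute irreducibility, $\mathcal{C}$ is a plane cubic, hence (after resolving singularities, or directly) a curve of genus $g\le 1$, so $|\#\mathcal{C}(\bF_q) - (q+1)| \le 2\sqrt{q} + c$ for a small explicit constant $c$ accounting for points at infinity and singular points (one should pin down $g$: a plane cubic is either smooth of genus $1$ or has a single node/cusp and genus $0$; inspecting the partial derivatives of $F$ will tell us which case occurs, possibly depending on $\omega$). Then one must subtract the contribution of the ``forbidden'' rational points: those on the lines $X+Y=0$, $X=\tfrac{\omega}{\omega-1}$, $X=\tfrac{1}{\omega-1}$, and $1+X+\omega Y=0$. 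Each of these lines meets $\mathcal{C}$ in at most $3$ points, so at most $12$ rational points are excluded, plus points at infinity. Therefore the number of ``good'' rational points is at least $q + 1 - 2\sqrt q - c - 12$, which is positive once $q$ exceeds an explicit bound (something like $q\ge 29$ or so after being careful with the constants). This reduces the problem to finitely many small $q$, and for those one checks by hand or by the stated computation which admissible $\omega$ yield no good point, arriving at exactly the list defining $\mathcal{B}_*$.

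For the ``if'' direction one simply exhibits, for each pair $(q,\omega)\in\mathcal{B}_*$, that every rational point of $\mathcal{C}$ lies on one of the four forbidden lines; since $\mathcal{C}$ has only $O(q)$ rational points and these are few for $q\le 13$, this is a finite verification. I would present it as a table (the proof text will reference a direct check), perhaps indicating one representative computation. The cleanest way to organise the whole argument is: (1) factorization analysis for irreducibility; (2) determine the genus and the precise number of points at infinity of $\mathcal{C}$; (3) the Hasse--Weil bound plus the line-intersection bookkeeping to get a threshold $q_0$; (4) enumeration for $q< q_0$.

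The main obstacle I expect is step~(3): getting the constants sharp enough that the threshold $q_0$ is small enough to make the finite check genuinely finite and short. Naively bounding can leave a gap (e.g. if the crude bound only kicks in at $q\ge 50$ but some ``bad'' pair has $q=13$, that is fine, but if it forces checking $q$ up to $47$ by hand that is unpleasant). This requires careful accounting of: whether $\mathcal{C}$ is genuinely genus $1$ or genus $0$ for the relevant $\omega$ (genus $0$ gives the much better bound $q+1 - \#\{\text{pts at infinity and singular pts}\}$ with no $\sqrt q$ term at all, if the node is rational), exactly how many of the forbidden-line intersection points are rational and whether they coincide, and the number of points of $\mathcal{C}$ at infinity. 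A secondary subtlety is that the forbidden lines might themselves be components at infinity or pass through singular points of $\mathcal{C}$, which would change the count; this has to be checked. Also one should confirm that the constraint $\omega\ne\pm 1, 0$ is exactly what is needed for irreducibility and that no extra degenerate $\omega$ needs separate treatment.
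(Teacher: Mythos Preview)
Your plan is essentially the same as the paper's: analyse possible linear factors of the cubic via their leading part (equivalently, their point at infinity), then use the Hasse--Weil bound for a curve of genus $\le 1$ together with the trivial bound of $3$ intersection points per forbidden line, leaving a finite computer check for small $q$. The paper's threshold is $q\ge 23$ (from $q+1-2\sqrt{q}\ge 14$), with $q\in\{16,17,19\}$ and the determination of $\mathcal B_*$ for $q\le 13$ done by direct computation.

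One point of care in your irreducibility sketch: the leading cubic part $(\omega-1)^2XY(X+\omega Y)$ only tells you that a putative linear factor has the \emph{form} $X-k$, $Y-k$ or $X+\omega Y-k$ for some constant $k\in\overline{\bF_q}$; your explicit checks $F(0,Y)=\omega Y$ and $F(X,0)=\omega^2 X$ handle only $k=0$. You must verify that $F(k,Y)$, $F(X,k)$ and $F(-\omega Y+k,Y)$ are each nonzero polynomials for \emph{every} $k$, which is exactly what the paper does (and where the hypothesis $\omega\ne -1$ is actually used, in the third case). The vague appeal to the Newton polygon does not by itself close this gap.
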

\begin{proof}
We start by proving that the planar cubic $\cC$ defined by the following polynomial
\[ P(X,Y)=(\omega-1)^2X^2Y+\omega Y + (\omega-1)^2\omega XY^2+\omega^2 X+ (\omega-1)^2(\omega+1)XY \]
is absolutely irreducible.
Suppose, by way of contradiction, that $\cC$ is reducible over some field extension of $\bF_q$, then one of its component is a line $\ell$ having as point at infinity one of those of $\C$.
The points at infinity of $\C$ are $(1:0:0)$, $(0:1:0)$ and $(-\omega:1:0)$.

If $\ell$ has $(1:0:0)$ as its point at infinity, then $\ell$ has equation $Y=k$, for some $k \in \overline{\mathbb{F}}_q$, the algebraic closure of $\bF_q$.
In particular, the polynomial
\[ P(X,k)=k(\omega-1)^2 X^2+k\omega+k^2(\omega-1)^2\omega X+\omega^2 X+k(\omega-1)^2(\omega+1) X \]
should be the zero polynomial. This is a contradiction since $\omega \ne 0$.

If $\ell$ has $(0:1:0)$ as its point at infinity, then $\ell$  has equation $X=k$, for some $k \in \overline{\mathbb{F}}_q$.
In particular, the polynomial
\[ P(k,Y)=(\omega-1)^2k^2Y+\omega Y+\omega(\omega-1)^2 k Y^2+\omega^2 k+(\omega-1)^2(\omega+1)kY \]
should be the zero polynomial. This is a contradiction since $\omega \ne 0$.

If $\ell$ has $(-\omega:1:0)$ as its point at infinity, then $\ell$ has equation $X=-\omega Y+ k$, for some $k \in \overline{\mathbb{F}}_q$.
In particular, the polynomial
\[ P(-\omega Y +k,Y)= [-2(\omega-1)^2\omega+(\omega-1)^2\omega k-\omega (\omega-1)^2(\omega+1)]Y^2+\]
\[ [(\omega-1)^2k^2+\omega-\omega^3+(\omega-1)^2(\omega+1)k]Y+\omega^2 k \]
should be the zero polynomial.
The constant term implies that $k=0$, and the coefficient of the term of degree one implies $\omega^3-\omega=0$. This is again a contradiction, since $\omega \notin \{0,\pm 1\}$.

This shows that the cubic curve $\cC$ is absolutely irreducible.
Note that the lines 
$$
\ell_1 \colon X+Y=0, \quad \ell_2 \colon X= \frac{\omega}{\omega-1}, \quad \ell_3 \colon X= \frac{1}{\omega -1}, \mbox{ and } \ell_4 \colon 1+X+\omega Y=0
$$
can share with the curve $\mathcal{C}$ at most three points.
Therefore, denoting by $\mathcal{C}(\mathbb{F}_q)$ the set of $\mathbb{F}_q$-rational points of $\mathcal{C}$, if $|\mathcal{C}(\mathbb{F}_q)|\geq 14$, there exists at least one $\mathbb{F}_q$-rational affine point not lying on the lines $\ell_1$, $\ell_2$, $\ell_3$ and $\ell_4$. As $\mathcal{C}$ is a cubic curve, its genus is either $0$ or $1$, and by the Hasse-Weil bound we have
\[ |\mathcal{C}(\mathbb{F}_q)|\geq q+1-2\sqrt{q}. \]
Therefore, when $q\geq 23$, we have $|\mathcal{C}(\mathbb{F}_q)|\geq14$, and hence there exists an affine $\mathbb{F}_q$-rational point of $\mathcal{C}$, with affine coordinates $(x,y)$, satisfying $x\ne-y$, $x\ne \frac{\omega}{\omega-1}$, $x\ne \frac{1}{\omega -1}$ and $1+x+\omega y \ne 0$.
The remaining cases $q\in \{16,17,19\}$ and the determination of the values of $\omega$ for $q \in \{4,5,7,8,9,11,13\}$ are easily verified by computer.
\end{proof}

\begin{theorem}\label{thm:codim_two_P9}
Let $\omega$ be an element in $\bF_q\setminus\{0,1,-1\}$, and let $\mathcal S$ be the subspace defined in Theorem \ref{thm:codim_three_P9}.
The subspace
\[ \mathcal{S}'=\langle \mathcal{S},(\mathbf{e}_1+\mathbf{e}_3+\omega^2 \mathbf{e}_4)^{\otimes2}\rangle  \]
is a $7$-dimensional identifiable Waring subspace of $\PP_q^9$ with respect to $\VV_{3,2}(\bF_q)$ if and only if $(q,\omega)\in \mathcal B_*$. In particular, $\mathcal{S}'$ is not an identifiable Waring subspace for $q\geq 16$.
\end{theorem}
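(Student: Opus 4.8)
The plan is to mirror the explicit matrix computations of the previous theorems and to show that the only obstruction to Waring identifiability of $\mathcal{S}'$ is the existence of a sufficiently generic $\bF_q$-rational point on the cubic $\mathcal{C}$ of Lemma~\ref{lemma:cubic}. First I would represent the nonzero vectors of $\mathcal{S}'$ by symmetric $4\times 4$ matrices $M_\alpha$ depending on eight parameters $\alpha=(a,b,c,d,e,f,g,h)\in\bF_q^8\setminus\{0\}$, obtained from the matrix \eqref{eqn:M_codim2} of Theorem~\ref{thm:codim_three_P9} by adding $h$ times the rank-one matrix associated to $(\mathbf{e}_1+\mathbf{e}_3+\omega^2\mathbf{e}_4)^{\otimes2}$. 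A routine check that the eight spanning points are linearly independent shows that $\mathcal{S}'$ is $7$-dimensional; since $\mathcal{S}'$ is then the span of the $\nu_2$-images of eight linearly independent points of $\PP_q^3$, it is a Waring subspace, and by the definition of Waring identifiability (a ninth point of the witness would be a linear combination of the eight independent ones, so a proper subset of the witness would still span $\mathcal{S}'$) it is Waring identifiable if and only if the only rank-one matrices of the form $M_\alpha$ are those obtained by setting all but one of $a,\dots,h$ equal to zero.

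Next I would carry out the rank-one analysis. The subcase $h=0$ is already settled by Theorem~\ref{thm:codim_three_P9}, so I may assume $h\ne 0$ and normalize $h=1$. Then, exactly as in the proof of Theorem~\ref{thm:codim_three_P9}, I would successively impose the vanishing of the $2\times 2$ minors $M_{(i,j)(k,l)}$ and branch into cases: the degenerate branches reproduce the eight obvious rank-one matrices, while the surviving branch eliminates $a,b,c,d,e$ and leaves one cubic relation between $f$ and $g$, which --- after the appropriate change of variables --- is precisely the equation $P(X,Y)=0$ of Lemma~\ref{lemma:cubic}, the four exceptional lines $\ell_1,\dots,\ell_4$ of that lemma being exactly the loci on which the associated rank-one matrix collapses to one of the eight basis matrices. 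Hence $\mathcal{S}'$ contains a ninth point of $\VV_{3,2}(\bF_q)$ if and only if $\mathcal{C}$ has an $\bF_q$-rational point with $x\ne-y$, $x\ne\tfrac{\omega}{\omega-1}$, $x\ne\tfrac{1}{\omega-1}$ and $1+x+\omega y\ne 0$, which by Lemma~\ref{lemma:cubic} happens if and only if $(q,\omega)\notin\mathcal B_*$. This yields the claimed equivalence; and since every pair in $\mathcal B_*$ has $q\le 13$, it yields the ``in particular'' statement for $q\ge 16$.

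The step I expect to be the main obstacle is the minor analysis itself: with eight free parameters the case tree is larger than in the earlier proofs, and one must verify both that each degenerate branch genuinely produces one of the eight basis points --- so that a point of $\mathcal{C}$ lying off $\ell_1\cup\cdots\cup\ell_4$ really does yield a new witness point and thereby destroys identifiability --- and that the surviving relation is literally the cubic $P(X,Y)$, not merely a curve of the same type. Once this correspondence with $\mathcal{C}$ is pinned down, Lemma~\ref{lemma:cubic} supplies everything else.
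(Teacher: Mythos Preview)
Your proposal is correct and follows essentially the same route as the paper: represent $\mathcal{S}'$ by symmetric matrices $M_\alpha$, reduce the rank-one condition via successive $2\times 2$ minors to the cubic of Lemma~\ref{lemma:cubic} in the variables $(g,f)$, and invoke that lemma. Your explicit separation of the case $h=0$ (handled by Theorem~\ref{thm:codim_three_P9}) before normalizing $h=1$ is in fact cleaner than the paper's write-up, which silently works with $h=1$ throughout; and no genuine change of variables is needed---the cubic one obtains is literally $P(g,f)$.
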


\begin{proof}
The nonzero vectors of $\mathcal{S}'$ can be represented by the matrices $M_\alpha$, with $\alpha=(a,b,c,d,e,f,g,h) \in \bF_q^8\setminus \{0\}$, where $M_\alpha$ is the matrix
{\footnotesize{
$$%\begin{equation}%\label{eqn:M}
\left(
\begin{array}{cccc}
 a+f+g+h & \omega f+\omega g & \omega f+g+h & \omega^2 f +\omega g+\omega^2 h  \\
 \omega f+\omega g & b+e+\omega^2 f+\omega^2 g & e+\omega^2 f+\omega g & e+\omega^3 f +\omega^2 g \\
 \omega f+g+h & e+\omega^2 f +\omega g & c+e+\omega^2 f+g+h & e+\omega^3 f +\omega g+\omega^2 h \\
 \omega^2 f+\omega g+ \omega^2 h  & e+\omega^3 f+\omega^2 g & e+\omega^3 f+\omega g+\omega^2 h & d+e+\omega^4 f +\omega^2 g+\omega^4 h \\
\end{array}
\right).
$$ %\end{equation} 
}}
One easily verifies that $\mathcal S'$ is a 7-dimensional subspace of $\PP^9$. In order to prove that $\mathcal S'$ is a Waring subspace, it suffices to show that the only matrices of rank one of the form $M_\alpha$ are obtained by setting all but one of the parameters $a,b,c,d,e,f,g,h$ equal to zero.

By considering $M_{(1,4)(2,3)}$ we get
\[ e(-1+(\omega-1)g)=-g(1+f(\omega-1))\omega^2(\omega -1). \]
Note that $g\ne \frac{1}{\omega-1}$, otherwise by considering $M_{(2,3)(1,4)}=0$ we get $f=\frac{-1}{\omega-1}$, and from the condition $M_{(2,4)(1,4)}=0$ we would get $\omega(\omega-1)(\omega^2-\omega)=0$, which is not possible.
Therefore, we have 
$$e=\frac{-g(1+f(\omega-1))\omega^2(\omega -1)}{-1+(\omega-1)g}.$$
Computing $M_{(2,3)(1,2)}=0$ we get
\[ b(1+g+\omega f)=-\omega^2[f (1+g (\omega-1)^2)+\omega g]. \]
Note that $1+g+\omega f\neq 0$, otherwise from $M_{(2,3)(1,2)}=0$ and from $M_{(1,3)(1,2)}=0$ we would get $f=\frac{-1}{\omega-1}$ and $a=-1$, from which it would follow that $M_{(2,4)(1,4)}=\omega^3(\omega-1)=0$, a contradiction. Hence, 
$$b=-\omega^2\frac{f (1+g (\omega-1)^2)+\omega g}{1+g+\omega f}.$$ 
From $M_{(1,2)(1,4)}=0$, we get either $f+g=0$ or 
\[ a(g+\omega-\omega g )=-g(\omega-1)[f(\omega-1)+\omega]. \]
If $g=-f$, then by $M_{(2,3)(2,3)}=0$ we get $f=0$ and one easily get $a=b=c=d=e=f=g=0$.
Also, $g\neq \frac{\omega}{\omega-1}$, otherwise we would get a contradiction as before.
Hence, we may assume that $f+g\ne 0$, $g\ne \frac{\omega}{\omega-1}$, $g\ne \frac{1}{\omega-1}$ and 
\[ a=\frac{-g(\omega-1)[f(\omega-1)+\omega]}{g+\omega-\omega g}. \]
The condition $M_{(2,3)(1,3)}=0$ implies
\[ c= \frac{(\omega-1)[f(1+g(\omega-1)^2)+\omega g]}{f+g}, \]
and the condition $M_{(1,4)(1,4)}=0$ implies either $g=-\omega-\omega f$ or
\begin{eqnarray}\label{eqn:d}
 d=-\frac{\omega^2(\omega-1)^2g[f(\omega-1)+\omega]}{-1+g(\omega-1)}.
\end{eqnarray}
If $g=-\omega-\omega f$, then $M_{(1,3)(1,3)}=(\omega-1)^2\ne 0$, again a contradiction.
Assume $d$ satisfies Equation (\ref{eqn:d}). Then, as $f+g\ne 0$, $1+g+\omega f \ne 0$, $(\omega-1)g-\omega \ne 0$ and $1+g+\omega f\ne 0$, remaining minors of $M$ are either zero or scalar multiples of
\[ (\omega-1)^2g^2f+\omega f+(\omega-1)^2\omega gf^2+\omega^2 g+ (\omega-1)^2(\omega+1) gf. \]
It follows that the subspace $S'$ is a Waring subspace if and only if the affine cubic curve obtained by considering the above expression as a polynomial in the indeterminates $f$ and $g$ has no $\bF_q$-rational points 
such that $g\ne-f$, $g\ne \frac{\omega}{\omega-1}$ and $g\ne \frac{1}{\omega -1}$.
Applying Lemma \ref{lemma:cubic} concludes the proof.
\end{proof}

The identifiable Waring subspaces of codimension two in $\PP_q^9$ correspond to pencils of quadrics in $\bP_q^3$, with base consisting of eight points whose image under the Veronese map spans a seven-dimensional subspace of $\PP_q^9$. As far as we know there is no complete classification of pencils of quadrics in $\bP_q^3$. In what follows we will rely on the (partial) classification results from \cite{BruenHir}.
We will also make use of the following lemma.

\begin{lemma}\label{lem:int2cones}
Let $C_1$ and $C_2$ be two irreducible quadratic cones in $\PP_q^3$, with $q\geq 53$. Then $C_1(\bF_q)$ does not meet $C_2(\bF_q)$ in eight points.
\end{lemma}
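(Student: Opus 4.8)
The plan is to bound the number of common $\bF_q$-rational points of two irreducible quadratic cones $C_1$ and $C_2$ in $\PP_q^3$ by a function that grows like $O(\sqrt q)$, and then observe that for $q\geq 53$ this bound is strictly less than $8$ unless some degenerate configuration occurs, which must be ruled out separately. The starting point is that $C_1\cap C_2$ is a curve of degree $4$ in $\PP^3$ (the intersection of two quadrics), cut out by the pencil of quadrics they generate. The key distinction is whether this quartic curve is irreducible or splits into lower-degree components.

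First I would analyze the reducible case. If the intersection quartic contains a line $\ell$, then $\ell$ lies on both cones. A line on an irreducible quadratic cone passes through the vertex, so $\ell$ would join the two vertices (or the two vertices coincide). One then peels off $\ell$: the residual intersection is a cubic curve (or a union of a conic and a line, or three lines, etc.), and I would bound its $\bF_q$-points by at most $3(q+1)$ or apply the Hasse--Weil bound to each irreducible component. The point is that in each of these reducible subcases the number of rational points lying on the cones but \emph{not} already excluded is controlled; and crucially, if $\ell\subset C_1\cap C_2$ then the points of $\ell$ are a whole line's worth, so $|C_1(\bF_q)\cap C_2(\bF_q)|$ is either very large (at least $q+1$, hence far more than $8$ for $q\geq 53$) or the residual part is small; one then checks that the arithmetic never lands exactly on the value $8$. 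A careful enumeration of the possible factorization types of a degree-$4$ space curve (irreducible quartic; conic plus conic; conic plus two lines; four lines; twisted cubic plus line; etc.) together with the incidence constraint that each component lie on a quadratic cone is the bookkeeping heart of this case.

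Second I would handle the irreducible case: if the quartic $\mathcal{C}=C_1\cap C_2$ is absolutely irreducible, then it has arithmetic genus $1$, so geometric genus at most $1$, and the Hasse--Weil bound gives $|\mathcal{C}(\bF_q)|\leq q+1+2\sqrt q$. This is of course much bigger than $8$, so the bound alone does not finish the argument, and here is where the real work lies: I need the \emph{opposite} inequality, i.e.\ I must show $|\mathcal{C}(\bF_q)|\neq 8$, which means I want a \emph{lower} bound forcing $|\mathcal{C}(\bF_q)|\geq 9$ (or showing it can never equal exactly $8$). For a smooth curve of genus $0$ it has $q+1$ points, and for genus $1$ it has $q+1-t$ points with $|t|\leq 2\sqrt q$; for $q\geq 53$ we have $q+1-2\sqrt q > 8$, so an irreducible quartic \emph{that is smooth} automatically has more than $8$ rational points. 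The subtlety is singular irreducible quartics (genus $0$ with a node or cusp): a rational nodal quartic has $q+1$ or $q-1$ or $q+1\mp(\text{something})$ points depending on whether the node is split, non-split, or a cusp, but in all cases at least $q-1 > 8$ for $q\geq 53$. So in fact the absolutely irreducible case is the \emph{easy} one once the genus is pinned down.

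The main obstacle, therefore, is the reducible case, and within it the configurations where the intersection degenerates most --- in particular when the two cones share a common line through (possibly coinciding) vertices, or when they are tangent along a conic. I expect to need the classification of pencils of quadrics (as the paper signals it will invoke results from \cite{BruenHir}) to organize which reducible intersection types actually arise from two \emph{irreducible} cones, since not every factorization type of a quartic is realizable this way; two quadratic cones meeting in four coplanar lines, for instance, cannot both be irreducible. Concretely the steps are: (1) pass to the pencil spanned by $C_1,C_2$ and use the classification to list the possible base-locus types; (2) discard types incompatible with both quadrics being irreducible cones; (3) for the irreducible-quartic type, invoke the genus-$\leq 1$ Hasse--Weil estimate to get $|\mathcal{C}(\bF_q)|\geq q-1>8$; (4) for each remaining reducible type, count $\bF_q$-points component by component and verify the total is never exactly $8$ for $q\geq 53$, the worst case being a single conic ($q+1$ points) together with leftover lines or points, where one checks $q+1 \ne 8$ and that adding or removing a bounded number of intersection points cannot hit $8$ either. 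The constant $53$ will emerge from requiring $q+1-2\sqrt q > 8$, i.e.\ $\sqrt q > 1 + 2\sqrt 2$ roughly, which is comfortably satisfied at $q=53$, together with the finitely many small-$q$ checks absorbed into that threshold.
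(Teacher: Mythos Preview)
Your approach is sound in outline but genuinely different from the paper's, and in one respect sharper.

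The paper does \emph{not} classify the space quartic $C_1\cap C_2$ directly. Instead it fixes $C_1$ in the form $X_1X_3=X_0^2$, uses the affine parametrisation $(x,y,z)\mapsto(x,x^2,z)$ of $C_1$ to pull $C_2$ back to a plane curve $h(X,Z)=0$ of degree $\le 4$ (and degree $\le 2$ in $Z$), and then argues on this plane curve: if absolutely irreducible, Hasse--Weil with genus $\le 3$ gives $\ge q+1-6\sqrt q$ rational points; if reducible, the factorisation in $Z$ forces either an $\bF_q$-rational component with $\ge q+1$ points or at most a bounded handful. Accounting for up to two points on the line at infinity, the threshold $q+1-6\sqrt q>10$ is what produces the constant $53$.

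Your route---treating $C_1\cap C_2$ as a space curve of arithmetic genus $1$ and invoking Hasse--Weil with $g\le 1$---would, if completed, yield the stronger inequality $q+1-2\sqrt q>8$, which already holds for $q\ge 16$. So your explanation of where $53$ comes from is wrong: that constant is an artefact of the paper's plane-curve detour (genus bound $3$, plus the two infinity points), not of your bound. This is worth knowing, because it means your argument proves more than the lemma asks.

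Two places where your sketch needs tightening before it is a proof. First, your case split is ``reducible'' versus ``absolutely irreducible'', but you should also explicitly dispose of quartics that are $\bF_q$-irreducible yet split over an extension (e.g.\ two Galois-conjugate conics over $\bF_{q^2}$, whose $\bF_q$-points number at most $4$); this is easy but must be said. Second, in the reducible case you promise a ``careful enumeration'' and invoke \cite{BruenHir}, but you do not actually carry it out; the paper avoids this entirely via the $Z$-degree-$2$ structure of $h(X,Z)$, which makes the reducible analysis almost mechanical. Your approach trades that short computation for a longer case list, which is fine, but the list has to be written down and checked (including non-reduced intersections such as a double conic, and components defined only over extensions) before the proof is complete.
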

\begin{proof}
Without loss of generality, we may assume that $C_1$ has equation $X_1X_3=X_0^2$ and $C_2$ has equation $f(X_0,X_1,X_2,X_3)=0$ with 
$$f(X_0,X_1,X_2,X_3)=\sum_{i\leq j} f_{ij} X_iX_j\in \mathbb{F}_{q}[X_0,X_1,X_2,X_3].$$
Observe that $C_1$ meets the plane at infinity $\pi_{\infty}\colon X_3=0$ in the line $\ell_{\infty}\colon X_0=X_3=0$ and so either $\ell_{\infty}$ is contained in $C_2$ or meets $C_2$ in at most two $\bF_q$-rational points.
We may assume that $|\ell_{\infty} \cap C_2(\bF_q)|\leq 2$, otherwise $|C_1(\bF_q)\cap C_2(\bF_q)|\geq q+1>8$.
An affine point $(x,y,z)$ is in $C_1 \cap C_2$ if and only if its coordinates satisfy the following system
\begin{equation}\label{eq:coneint} \left\{ \begin{array}{ll} Y=X^2,\\ g(X,Y,Z)=0, \end{array}\right. \end{equation}
where $g(X,Y,Z)=f(X,Y,Z,1)$.
Let $h(X,Z)=g(X,X^2,Z)$, i.e.
\[ h(X,Z)= (f_{00}+f_{13})X^2+f_{11}X^4+f_{22}Z^2+f_{01}X^3+f_{02}XZ+f_{12}X^2Z+f_{03}X+f_{23}Z+f_{33}, \]
and observe that $h(X,Z)$ has degree at most $2$ in $Z$ and at most $4$ in $X$. 
The number of solutions in $\mathbb{F}_q^3$ of System \eqref{eq:coneint} coincides with the number of $\mathbb{F}_q$-rational affine points of the planar curve $\mathcal{C}$ defined by $h(X,Z)=0$.
Note that the number of points at infinity of $\mathcal{C}$ can be at most two (since $Y=X^2$ meets $\pi_{\infty}$ in $\ell_{\infty}$ which has at most two points in common with $C_2(\F_q)$). Hence if we prove that $\mathcal{C}$ has more than ten $\mathbb{F}_q$-rational points, then the size of $C_1(\bF_q)\cap C_2(\bF_q)$ is larger than eight.

\bigskip

First assume that $\mathcal{C}$ is absolutely irreducible. By applying the Hasse-Weil bound one gets that the number of $\mathbb{F}_q$-rational points of $\mathcal{C}$ is at least $q+1-6\sqrt{q}$ which is larger than $10$ because of the assumption on $q$.

\bigskip

Now, suppose that $\mathcal{C}$ is not absolutely irreducible. 

\bigskip

If $h(X,Z)$ splits over $\bF_q$ then $h(X,Z)$ has either a linear factor over $\bF_q$, an absolutely irreducible factor of degree two over $\bF_q$, or it is the product of $4$ linear factors over $\bF_{q^2}$ (two pairs of conjugates factors). This implies that the size of $\C(\bF_q)$ is either at least $q+1$, which is greater than $9$, or at most two. Together with the at most two points at infinity, the latter case implies that $C_1(\bF_q) \cap C_2(\bF_q)$ has size most $4$.

\bigskip

Finally, suppose that $h(X,Z)$ splits over some extension of $\bF_q$ but not over $\bF_q$.

\bigskip

If $f_{22}=0$ and $(f_{02},f_{12},f_{23})\ne (0,0,0)$, then $h(X,Z)$ is linear in $Z$, contradicting our assumption.
If $(f_{22},f_{02},f_{12},f_{23})=(0,0,0,0)$, then $\mathcal{C}$ is the union of four lines (since $h(X,Z) \in \bF_q[X]$) through $(0:1:0)$. Since each of these lines has $(0:1:0)$ as its unique $\bF_q$-rational point, the 
number of affine $\bF_q$-rational points on $\cC$ is one. Therefore, also the
number of affine points in $C_1(\bF_q)\cap C_2(\bF_q)$ is one.
Note that in this case the line $\ell=C_1 \cap \pi_{\infty}$ meets $C_2$ at the point $(0:0:1:0)$, so that $C_1(\bF_q)\cap C_2(\bF_q)$ has size two.

\bigskip

If $f_{22}\ne 0$ then without loss of generality we may assume $f_{22}=1$. Then 
$$h(X,Z)=(Z-\alpha(X))(Z-\beta(X)),$$
with
$$\alpha(X)=a_2X^2+a_1X+a_0\notin\mathbb{F}_q[X]$$ 
and 
$$\beta(X)=a_2^qX^2+a_1^qX+a_0^q,$$ 
for some $a_0,a_2,a_2\in \bF_{q^2}[X]$.
The point $(x,z)$ is an $\mathbb{F}_q$-rational point of $\mathcal{C}$ if and only if it is an $\mathbb{F}_q$-rational point of
\[ \left\{ \begin{array}{ll} Z=\alpha(X), \\ Z=\beta(X), \end{array}\right. \]
which can have at most four $\bF_q$-rational points.
Together with the points at infinity, $C_1(\bF_q) \cap C_2(\bF_q)$ can have at most $6$ points.
\end{proof}

\begin{theorem}\label{th:class7}
For $q\geq 53$ there are no identifiable Waring subspaces of codimension two in $\PP_q^9$ with respect to $\VV_{3,2}(\bF_q)$, i.e. 
$$\eta_7(\VV_{3,2}(\bF_{q\geq 53}))=0.$$
Furthermore, an identifiable Waring subspace of codimension two in $\PP_q^9$  with respect to $\VV_{3,2}(\bF_q)$, for $q\leq 13$, is spanned by the images under $\nu_2$ of one the following curves of $\PP_q^3$:
\begin{enumerate}
    \item four distinct lines for $q=2$;
    \item two skew lines over $\bF_q$ and two conjugate lines over $\bF_{q^2}$ for $q=3$;
    \item two disjoint conics over $\bF_q$ for $q=3$;
    \item two conics intersecting in two points for $q=4$;
    \item an elliptic curve for $q\in \{ 4,5,7,8,9,11,13 \}$;
    \item an absolutely irreducible rational curve (which is not a twisted cubic and which is a base of a linear system of quadrics of dimension two) for $q=7$.
\end{enumerate}
Moreover, each of these cases occurs and gives at least one example of an identifiable Waring subspace. 
\end{theorem}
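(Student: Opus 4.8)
The plan is to split Theorem~\ref{th:class7} into two parts: the non-existence statement for $q\geq 53$, and the classification for small $q$.

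\textbf{Non-existence for $q\geq 53$.} An identifiable Waring subspace of codimension two in $\PP_q^9$ with respect to $\VV_{3,2}(\bF_q)$ corresponds to a pencil of quadrics in $\PP_q^3$ whose base consists of exactly eight points (no fewer, no more, and no degenerate configuration collapsing the witness), whose images under $\nu_2$ span a $7$-dimensional subspace. So I would enumerate the possible pencils of quadrics in $\PP_q^3$, using the partial classification in \cite{BruenHir}, and for each type bound the number of $\bF_q$-rational base points. The base of a pencil $\langle Q_1,Q_2\rangle$ is $Q_1\cap Q_2$, a curve of degree $4$ (when the pencil is non-degenerate) which is either an elliptic quartic, or degenerates into unions of lines and conics, or into a twisted cubic plus a line, etc. For an identifiable Waring subspace we need the set-theoretic $\bF_q$-points of this base to be exactly eight and to span $\PP_q^7$. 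The key technical input is Lemma~\ref{lem:int2cones}: if two members of the pencil are irreducible quadratic cones, their $\bF_q$-point sets cannot meet in exactly eight points once $q\geq 53$. After passing through the degenerate cases (where the base contains a line, a plane conic over $\bF_q$ or $\bF_{q^2}$, a double structure, etc.), one finds that in every configuration that could have exactly eight $\bF_q$-points, at least two members of the pencil are irreducible cones, so Lemma~\ref{lem:int2cones} applies; and an elliptic quartic has $|E(\bF_q)|\geq q+1-2\sqrt q > 8$ for $q\geq 53$. This rules out all cases, giving $\eta_7(\VV_{3,2}(\bF_{q\geq 53}))=0$.

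\textbf{Classification for $q\leq 13$.} Here I would argue that any identifiable Waring subspace of codimension two, being the span of $\nu_2$ of the $\bF_q$-points of the base curve $\cX$ of a pencil of quadrics, forces $\cX(\bF_q)$ to be a set of eight points whose image spans $\PP_q^7$; conversely, any such configuration gives an identifiable Waring subspace precisely when no proper subset of the eight points already spans (this is the identifiability condition). Going through the (partial) classification of pencils of quadrics in $\PP_q^3$ from \cite{BruenHir} type by type, and discarding those whose base has the wrong number of $\bF_q$-points or whose $\nu_2$-image spans a subspace of the wrong dimension, leaves exactly the listed cases (1)--(6): four lines for $q=2$; a $(2,2)$-split (two skew $\bF_q$-lines plus a conjugate pair over $\bF_{q^2}$) for $q=3$; two disjoint conics for $q=3$; two conics meeting in two points for $q=4$; a genuine elliptic quartic for $q\in\{4,5,7,8,9,11,13\}$; and the one exceptional rational quartic (image of a degree-$4$ rational map, not a twisted cubic) for $q=7$. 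For the elliptic-curve case, the requirement $|E(\bF_q)|=8$ together with the span condition pins down which $q$ (and which isogeny/point-count class, recorded through the parameter $\omega$ via Lemma~\ref{lemma:cubic} and Theorem~\ref{thm:codim_two_P9}) can occur; that is exactly the set $\mathcal B_*$. For occurrence (``each case occurs''), I would exhibit an explicit pencil in each case: Theorem~\ref{thm:codim_two_P9} already supplies the elliptic-curve family for $(q,\omega)\in\mathcal B_*$, and the reducible cases are easy to write down directly (e.g.\ two explicit disjoint conics over $\bF_3$, two explicit conics meeting in two points over $\bF_4$), checking in each that the eight base points have $\nu_2$-image of dimension $7$ and that no seven of them already span.

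\textbf{Main obstacle.} The hard part is the exhaustive case analysis over the classification of pencils of quadrics in $\PP_q^3$, which in \cite{BruenHir} is only partial: for the degenerate pencils (those containing a repeated plane, a pair of planes, or a plane through a conic) one must carefully count $\bF_q$-points of the base and the dimension of its Veronese image, and for $q\leq 13$ some of these are most safely settled by direct computation. The cleanest conceptual reduction is the observation that, up to the finitely many genuinely degenerate base loci, a pencil with exactly eight $\bF_q$-rational base points must contain two irreducible cones, so that Lemma~\ref{lem:int2cones} does the bulk of the work for large $q$, while the small-$q$ list is then a bounded verification. Establishing that the enumeration in \cite{BruenHir} is complete enough for our purposes — and that every remaining type really does (or does not) realize eight spanning points — is where the care is needed.
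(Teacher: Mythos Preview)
Your overall strategy is the same as the paper's: translate codimension-two Waring subspaces into pencils of quadrics with an eight-point base, run through the Bruen--Hirschfeld classification for the pencils containing a non-singular quadric, and invoke Lemma~\ref{lem:int2cones} for large $q$. That part is fine.

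The gap is in your reduction for $q\geq 53$. You assert that ``in every configuration that could have exactly eight $\bF_q$-points, at least two members of the pencil are irreducible cones,'' and then apply Lemma~\ref{lem:int2cones}. This is not correct. The paper's case split is: (i) the pencil contains a non-singular quadric; (ii) all members are singular and at least two are irreducible cones; (iii) all members are singular and exactly one is a cone; (iv) all members are reducible (pairs of planes or repeated planes). Lemma~\ref{lem:int2cones} handles only case (ii). Cases (iii) and (iv) do \emph{not} contain two cones, and you have not addressed them. The paper disposes of them by elementary incidence counts: a plane meets an irreducible cone in $0$, $1$, or $\geq q+1$ points, so a pair of planes meets a cone in fewer than $q+1$ points only when the intersection has size $\leq 1$; and two pairs of planes always meet in $\geq q+1$ points. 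These arguments are short, but they are separate from the cone lemma and must be supplied.

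A smaller point: for case (i) and large $q$, you handle the elliptic quartic via Hasse--Weil, but you should also state explicitly why the remaining Bruen--Hirschfeld types (four lines, two lines plus a conjugate pair, two conics, twisted cubic plus a chord, rational quartic) are excluded: each has $\bF_q$-point count a fixed polynomial in $q$ (e.g.\ $4q$, $2q+2$, $2q$, $q+1$), all of which exceed $8$ once $q$ is large. The paper makes this explicit by observing that one may restrict to intersections of size less than $q+1$, which immediately discards all of case (i). Your phrase ``after passing through the degenerate cases'' gestures at this but does not actually carry it out, and your subsequent claim that what remains always has two cones is where the logic breaks.
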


\begin{proof}
Recall that identifiable Waring subspaces of dimension $7$ correspond to pencils of quadrics whose base consists of exactly $8$ points which is not contained in any net of quadrics of $\PP^3$. 
The pencil of quadrics defined by the quadratic forms $f$ and $g$ is denoted by $\cP(f,g)$.
We can divide the analysis of pencils of quadrics into the following cases:
\begin{enumerate}[$(i)$]
\item the pencil contains a non-singular quadric;
\item the pencil consists of singular quadrics and at least two quadratic cones;
\item the pencil consists of singular quadrics and one quadratic cone;
\item the pencil consists of reducible quadrics.
\end{enumerate}
In case $(i)$, by \cite[Sections 4 and 8]{BruenHir}, the base curve $\C$ of the pencil can be one of the following curves (for convenience of the reader, we follow the numbering from \cite{BruenHir}). By the above, we may restrict ourselves to the case where the number of $\bF_q$-rational points of $\cC$ can be even.

[1(a)] $\C$ is the union of four distinct lines over $\bF_q$, $|\C(\bF_q)|=4q$.
An example is given by $\cP(X_0X_2,X_1X_3)$, and for $q=2$ we obtain a codimension two identifiable Waring subspace.

[1(b)] $\C$ is the union of two distinct skew lines over $\bF_q$ and two conjugate lines over $\bF_{q^2}$, $|\C(\bF_q)|=2q+2$. An example is given by 
$$\cP(X_0X_3-X_1X_2,X_0X_2+b X_1 X_2 +X_1 X_3),$$
and this gives an identifiable Waring subspace for $q=3$.

[3(a)] $\C$ is the union of two disjoint conics over $\bF_q$, $|\C(\bF_q)|=2q+2$.
An example is given by 
$$\cP(X_0X_1, X_0^2+X_1^2+X_2^2+b X_2X_3+X_3^2),$$
where $b \in \mathbb{F}_q$ such that $x^2+bx+1$ is irreducible over $\mathbb{F}_q$ $b \in \mathbb{F}_q$ such that $x^2+bx+1$ is irreducible over $\mathbb{F}_q$. For $q$ odd there is another (inequivalent) example given by 
$$\cP(X_0X_1,X_0^2+\nu X_1^2+X_2^2+b X_2X_3+X_3^2),$$ 
where  and $\nu$ is a non-square in $\mathbb{F}_q$. Both pencils give identifiable Waring subspaces for $q=3$, which are inequivalent.

[3(c)] $\C$ is the union of two conics over $\bF_q$ with two points in common, $|\C(\bF_q)|=2q$.
An example is given by 
$$\cP(X_0X_1,X_0^2+X_1^2+ X_2X_3),$$  
and for $q$ odd, there is another inequivalent example given by 
$$\cP(X_0X_1,X_0^2+\nu X_1^2+ X_2X_3),$$ 
with $\nu$ a non-square in $\mathbb{F}_q$.
These pencils give inequivalent identifiable Waring subspaces for $q=4$.

[4(a)] $\C$ is the union of a twisted cubic and a bisecant, $|\C(\bF_q)|=2q$.
There is unique such pencil. An example is given by $\cP(X_0X_2-X_1^2,X_1 X_3-X_2^2)$, and this pencil induces an identifiable Waring subspace for $q=4$.

[4(c)] $\C$ is the union of a twisted cubic and an imagining chord, $|\C(\bF_q)|=2q+2$.
Again there is a unique such pencil. An example is given by
$\cP(X_0X_3-X_1 X_2+b X_1 X_3-bX_2^2,X_0 X_2 -X_1^2-X_1X_3+X_2^2)$, with  $b \in \mathbb{F}_q$ such that $x^2+bx+1$ is irreducible over $\mathbb{F}_q$. This pencil gives an identifiable Waring subspace for $q=3$.

[5(a)] $\C$ is an irreducible elliptic curve, $(\sqrt{q}-1)^2\leq |\C(\bF_q)|\leq (\sqrt{q}+1)^2$.
It is well-known that elliptic curves are the intersection of two quadrics of $\PP^3$ and hence the subspaces spanned by the images of the $\bF_q$-rational points are identifiable Waring subspaces of codimension two in $\PP^9$. 
Moreover, the existence of such subspaces is guaranteed by Theorem \ref{thm:codim_two_P9}.

[5(b)] $\C$ is an irreducible rational curve, $|\C(\bF_q)|= q+1$. Since the twisted cubic is the base curve of a linear system of quadrics in $\PP^3$ of dimension three for any $q$; see \cite[Lemma 21.1.6]{Hir3}, this case does not lead to an identifiable Waring subspace. 

Now suppose $q\geq 53$. Clearly, as we need $8$ points, we can now analyze the case when the size of the intersections of the two quadrics is less than $q+1$. This means that we only need to consider the cases $(ii)$, $(iii)$, and $(iv)$.
The case of the intersection of two quadratic cones is taken care of by Lemma \ref{lem:int2cones}.
Consider now the intersection between a pair of planes $\pi_1 \cup \pi_2$ and a cone $C$ such that $|C\cap (\pi_1\cup\pi_2)|<q+1$. Then $|C\cap (\pi_1\cup\pi_2)|\leq 1$, since a plane $\pi$ meets a cone in either zero points, one point (the vertex of the cone) or $q+1$ points.
As we need $8$ points, this case does not give rise to any identifiable Waring subspaces.
If the two quadrics are two pairs of planes, then their intersection is at least $q+1$ points. Again, this case does not give rise to any identifiable Waring subspaces.
\end{proof}

\begin{remark}
We have verified computationally, that there are no identifiable Waring subspaces of codimension two in $\PP_{16}^9$.
\end{remark}

\begin{corollary}
The coefficient $\eta_7(\mathbb{V}_{3,2}(\F_q))$ of the identifiable Waring polynomial of $\mathbb{V}_{3,2}(\F_q)$ is: 
\begin{itemize}
\item one if $q =2$;
\item three if $q=3$;
\item at least $2$ if $q=4$;
\item at least one if $q\in \{5,7,8,9,11,13\}$.
\end{itemize}
\end{corollary}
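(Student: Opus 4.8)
The proof will be a direct orbit count built on the classification in Theorem \ref{th:class7}. First I would recall, from the discussion preceding that theorem, that a codimension-two identifiable Waring subspace $S$ of $\PP_q^9$ with respect to $\VV_{3,2}(\bF_q)$ is exactly the span of the $\nu_2$-images of the eight $\bF_q$-rational points of the base of a pencil of quadrics in $\PP_q^3$, subject to those eight points not lying in a net; and that two such subspaces lie in the same $\Aut(\VV_{3,2}(\bF_q))$-orbit precisely when the corresponding eight-point configurations --- equivalently, the corresponding pencils, equivalently their base curves --- are $\PGL(4,q)$-equivalent. Thus $\eta_7(\VV_{3,2}(\bF_q))$ is the number of equivalence classes of such pencils, and the plan is to read this off the list in Theorem \ref{th:class7}, case by case.

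For $q=2$ only item (1) of Theorem \ref{th:class7} occurs: the base is a union of four distinct lines, and by the classification of pencils of quadrics in \cite{BruenHir} this pencil is unique up to projective equivalence, represented by $\cP(X_0X_2,X_1X_3)$. Hence there is exactly one orbit and $\eta_7(\VV_{3,2}(\bF_2))=1$; and if the projective stabiliser of $\VV_{3,2}(\bF_2)$ were strictly larger than the image of $\PGL(4,2)$, the extra automorphisms could only merge orbits, so the count would still be $1$.

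For $q=3$ only items (2) and (3) occur. Item (2) contributes the single class realised in item [1(b)] of the proof of Theorem \ref{th:class7}, and item (3) contributes the two inequivalent pencils of item [3(a)], which over $\bF_3$ are distinguished by the appearance of a non-square coefficient in the defining quadric. Since the base curve is recovered from the eight-point configuration, and a base that is a union of two skew lines together with a conjugate pair of lines cannot be $\PGL(4,3)$-equivalent to a base that is a union of two disjoint conics, these three classes are pairwise inequivalent; therefore $\eta_7(\VV_{3,2}(\bF_3))=3$.

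For $q=4$, Theorem \ref{th:class7} exhibits both a pencil whose base is two conics meeting in two points (item (4)) and, via Theorem \ref{thm:codim_two_P9} applied with $(4,\omega)\in\mathcal{B}_*$ for $\omega$ a primitive element of $\bF_4$, a pencil whose base is an irreducible elliptic curve (item (5)); a reducible base curve is never $\PGL(4,4)$-equivalent to a smooth irreducible one, so $\eta_7(\VV_{3,2}(\bF_4))\geq 2$. Finally, for each $q\in\{5,7,8,9,11,13\}$ a suitable $\omega$ satisfies $(q,\omega)\in\mathcal{B}_*$, so Theorem \ref{thm:codim_two_P9} provides an elliptic-curve base (item (5)) and $\eta_7(\VV_{3,2}(\bF_q))\geq 1$. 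The one part requiring genuine care is the bookkeeping for $q=2$ and $q=3$: one must be sure that the families listed in Theorem \ref{th:class7} and in \cite{BruenHir} are exhaustive over those small fields and that the stated inequivalences really persist there. Everything else follows immediately from the classification.
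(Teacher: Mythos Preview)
Your proposal is correct and follows essentially the same approach as the paper's own proof: both arguments use the classification of Theorem \ref{th:class7}, observe that base curves from different items of that list cannot be $\PGL(4,q)$-equivalent, and then appeal to \cite{BruenHir} to count the equivalence classes within each item (one class each for items (1), (2), (4), and two classes for item (3)). Your write-up is simply more explicit --- spelling out the elliptic-curve case via Theorem \ref{thm:codim_two_P9}, noting the possible enlargement of the stabiliser for $q=2$, and flagging the small-field bookkeeping --- but the underlying method is the same.
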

\begin{proof}
It is clear that two identifiable Waring subspaces of codimension two in $\PP_q^9$ obtained by curves from two different cases (1)-(5) of Theorem \ref{th:class7} are not equivalent.
By \cite[Theorem 4.4]{BruenHir}, the number of equivalence classes arising from the examples in (1),(2) and (4) of Theorem \ref{th:class7} is one, whereas the number of equivalence classes arising from the examples in (3) of Theorem \ref{th:class7} is two.
The assertion is then proved.
\end{proof}

\subsection{Identifiable Waring hyperplanes}
Since the hyperplanes sections of $\bP_q^9$ correspond to quadrics of $\PP_q^3$, we have the following.

\begin{theorem}\label{th:classhyper3}
The identifiable Waring hyperplanes of $\PP_q^9$ exist only for $q=2$ and they are the span of the images under $\nu_2$ of hyperbolic quadrics in $\PP_q^3$. In particular, $\eta_8(\VV_{3,2}(\bF_2))=1$ and $\eta_8(\VV_{3,2}(\bF_{q>2}))=0$.
\end{theorem}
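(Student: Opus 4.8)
The plan is to translate the problem into one about quadrics of $\PP_q^3$, using the correspondence recalled in Section~\ref{sec:Vero}. To a hyperplane $H$ of $\PP_q^9$ there corresponds a unique quadric $Q$ of $\PP_q^3$, namely the point $H^{\perp}$ of $\PP_q^9$ read as a quadratic form in four variables, and one has $H\cap\VV_{3,2}(\bF_q)=\nu_2(Q(\bF_q))$. Hence $H$ is a Waring subspace with respect to $\VV_{3,2}(\bF_q)$ exactly when the images under $\nu_2$ of the $\bF_q$-rational points of $Q$ span $H$, and---arguing as in the proof of Theorem~\ref{thm:V_2,2}$(iv)$---$H$ is an identifiable Waring hyperplane if and only if $|Q(\bF_q)|=9$ and these nine points have linearly independent Veronese images (equivalently, impose independent conditions on quadrics of $\PP_q^3$). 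Indeed, $\dim H=8$ forces any spanning subset of the witness $H\cap\VV_{3,2}(\bF_q)$ to have at least nine elements; if $|Q(\bF_q)|<9$ the witness cannot span $H$, so $H$ is not a Waring subspace; if $|Q(\bF_q)|>9$ then either the witness still does not span $H$, or it spans $H$ but contains a proper spanning subset, and in both cases $H$ is not an identifiable Waring hyperplane; and if $|Q(\bF_q)|=9$ and $H$ is Waring, then the nine points are linearly independent and the witness is the unique minimal spanning set of $H$.

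The next step is to run through the classification of quadrics of $\PP_q^3$ by rank and type and record their numbers of $\bF_q$-rational points, the same list of values occurring for $q$ even and $q$ odd: a repeated plane or a quadratic cone has $q^2+q+1$ points, a pair of distinct planes has $2q^2+q+1$ points, a pair of planes conjugate over $\bF_{q^2}$ has $q+1$ points (its real locus being a line), an elliptic quadric has $q^2+1$ points, and a hyperbolic quadric has $(q+1)^2$ points. The only ways to get exactly nine $\bF_q$-rational points are $(q+1)^2=9$, which gives $q=2$ with $Q$ hyperbolic, and $q+1=9$, which gives $q=8$ with $Q$ a pair of conjugate planes; but in the latter case the nine points are collinear, so their Veronese images span only a plane and $H$ is not even a Waring subspace. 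In every other case $|Q(\bF_q)|\ne 9$, so by the first paragraph $H$ is not an identifiable Waring hyperplane. Hence the unique candidate is the hyperbolic quadric of $\PP_2^3$.

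It then remains to verify that, for $q=2$, the nine $\bF_2$-rational points of a hyperbolic quadric do have linearly independent Veronese images. Taking the quadric to be $X_0X_3+X_1X_2=0$, whose $\bF_2$-rational points are the four coordinate points together with the points with coordinate vectors $(1,1,0,0)$, $(0,0,1,1)$, $(1,0,1,0)$, $(0,1,0,1)$ and $(1,1,1,1)$, a direct substitution into a general quadratic form $\sum_{i\le j}c_{ij}X_iX_j$ forces every $c_{ij}$ to vanish except $c_{03}$ and $c_{12}$ and forces $c_{12}=c_{03}$; hence the only quadratic forms over $\bF_2$ vanishing on these nine points are the scalar multiples of $X_0X_3+X_1X_2$. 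So the nine Veronese images span a hyperplane, which is therefore an identifiable Waring hyperplane, giving $\eta_8(\VV_{3,2}(\bF_2))\ge 1$. Since $\PGL(4,2)$ lifts to a subgroup of $\Aut(\VV_{3,2}(\bF_2))$ and acts transitively on the hyperbolic quadrics of $\PP_2^3$, all such hyperplanes lie in a single orbit, so $\eta_8(\VV_{3,2}(\bF_2))=1$, while $\eta_8(\VV_{3,2}(\bF_{q>2}))=0$ by the second paragraph.

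The routine ingredients are the classification of quadrics of $\PP_q^3$ with their point counts and the small linear-algebra check over $\bF_2$. The step that requires the most care is the reduction carried out in the first paragraph: making precise why exactly nine rational points, in suitably general position and no more, are needed, which rests on the elementary fact that a minimal spanning set of a projective subspace of dimension $k$ has precisely $k+1$ elements.
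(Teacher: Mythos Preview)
Your proof is correct and follows essentially the same approach as the paper: reduce to the classification of quadrics in $\PP_q^3$ by their number of $\bF_q$-rational points and isolate the hyperbolic quadric at $q=2$. You are in fact more thorough than the paper's own proof, since you explicitly dispose of the $q=8$ conjugate-planes case (nine collinear points, whose Veronese images span only a plane) and you verify directly that the nine $\bF_2$-points of the hyperbolic quadric impose independent conditions on quadrics---both points the paper leaves implicit.
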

\begin{proof}
In order to obtain an identifiable Waring hyperplane, one needs a quadric of $\bP_q^3$ consisting of exactly nine points, with the property that its image under the Veronese map spans a hyperplane. A quadric in $\bP_q^3$ is one of the following:\begin{itemize}
  \item a hyperbolic quadric (i.e. $(q+1)^2$ points);
  \item an elliptic quadric (i.e. $q^2+1$ points);
  \item an irreducible quadratic cone (i.e. $q^2+q+1$ points);
  \item a pair of conjugate planes over $\bF_{q^2}$ (i.e. $q+1$ points)
  \item two distinct planes (i.e. $2q^2+q+1$ points);
  \item one plane (i.e. $q^2+q+1$ points).
\end{itemize}

The only possibility is a hyperbolic quadric in $\bP_2^3$, and there is a unique such quadric up to projective equivalence.
\end{proof}

\section{Waring polynomials for elliptic and hyperbolic quadrics in $\bP^3(\bF_q)$}

In this section we collect some of the well-known properties of the elliptic and hyperbolic quadrics in $\bP(\bF_q)$ in terms of their Waring polynomials.

Let $\cX=\mathcal{E}$ be an elliptic quadric in $\bP^3_q$. The points on $\mathcal{E}(\bF_q)$ are Waring subspaces, and the group of the quadric acts transitively on them. When $q=2$, the quadric $\mathcal{E}(\bF_q)$ consists of 5 points.
For any external point $P$ of the quadric there exist two tangent planes and one $3$-secant plane (where the three points of intersection are not collinear). Then if $P$ is not the nuclei of the conic contained in the secant plane it will have $\mathcal{E}$-rank two, otherwise it will have $\mathcal{E}$-rank $3$.
A line $\ell$ is a Waring subspace with respect to $\mathcal{E}$ if and only if $|\ell \cap \mathcal{E}(\bF_q)|=2$ and the other lines are neither Waring subspaces nor Waring identifiable subspaces with respect to $\mathcal{E}$, unless $q=2$, in which case each external line is Waring identifiable, since it is contained in a unique plane of $\bP_2^3$ meeting $\mathcal E$ in a non-degenerate conic.
As the planes of $\PP_q^3$ meets $\mathcal{E}(\bF_q)$ in either an irreducible conic or in a point, when $q\geq3$ then the planes meeting $\mathcal{E}(\bF_q)$ in more than one point are Waring subspaces but not Waring identifiable subspaces, whereas when $q=2$ the Waring planes are also identifiable Waring planes. The tangent planes are neither Waring subspace nor Waring identifiable subspace with respect to $\mathcal{E}$.
Therefore,
if $q\ne 2$, then
\[ \mathcal{W}_{\mathcal{E}}(\mathbb{F}_q)=1+X+X^2,\,\,\mathcal{WI}_{\mathcal{E}}(\mathbb{F}_q)=1+X,\, \mathcal{IW}_{\mathcal{E}}(\mathbb{F}_q)=1+X, \]
whereas if $q=2$
\[ \mathcal{W}_{\mathcal{E}}(\mathbb{F}_2)=1+X+X^2,\,\,\mathcal{WI}_{\mathcal{E}}(\mathbb{F}_2)=3+2X+X^2,\, \mathcal{IW}_{\mathcal{E}}(\mathbb{F}_2)=1+X+X^2. \]
When $\cX=\mathcal{H}$ is a hyperbolic quadric, then the points of $\mathcal{H}(\bF_q)$ are Waring subspaces, through each external point there are at least two $2$-secant lines, hence they are not Waring identifiable. 
The only Waring lines are the $2$-secant lines (which are Waring identifiable) and the $(q+1)$-secant lines of $\mathcal{H}(\bF_q)$ (which are not Waring identifiable).
Identifiable Waring planes exist if and only if $q=2$ and are those meeting $\mathcal{H}(\bF_q)$ in an irreducible conic.
When $q\geq 3$, there do not exist identifiable Waring planes but there are two types of Waring planes: those meeting $\mathcal{H}(\bF_q)$ in an irreducible conic and those meeting $\mathcal{H}(\bF_q)$ is two lines. 
These two type of planes are inequivalent. 
Therefore, 
if $q\ne 2$, then
\[ \mathcal{W}_{\mathcal{H}}(\mathbb{F}_q)=1+2X+2X^2,\,\,\mathcal{WI}_{\mathcal{H}}(\mathbb{F}_q)=1+X,\, \mathcal{IW}_{\mathcal{H}}(\mathbb{F}_q)=1+X, \]
whereas if $q=2$
\[ \mathcal{W}_{\mathcal{H}}(\mathbb{F}_2)=1+2X+2X^2,\,\,\mathcal{WI}_{\mathcal{H}}(\mathbb{F}_2)=1+X+X^2,\, \mathcal{IW}_{\mathcal{H}}(\mathbb{F}_2)=1+X+X^2. \]

\section{Conclusions}

In this paper we introduced the concepts of (identifiable) Waring subspaces and Waring identifiable subspaces with respect to an algebraic variety. The number of projectively inequivalent such subspaces is captured with the relevant Waring polynomials.
We have seen how examples easily carry out when considering conics or quadrics over finite fields. Most of the paper is devoted to study the case when the involved algebraic variety is the Veronese variety, motivated by the connection with the theory of symmetric tensors and in particular with the Waring identifiability of symmetric tensors.
In particular, we have given examples of Waring subspaces with respect to the quadric Veronese variety and we have obtained a complete classification in $\PP^5$. Constructions and classification results are also obtained in $\PP^{9}$, but in this case a complete classification seems more difficult and the subject of possible future work. 

Another interesting problem is whether or not the Waring identifiability cases of \cite[Section 1]{AGMO} carry through over finite fields.
Since over finite fields the notion of \emph{generic form}/\emph{generic point} cannot be defined, our aim will be to find the parameters $(r,n,d_1,\ldots,d_r,k)$ for which there exists at least one $r$-tuple of forms in $\mathbb{F}_q[x_1,\ldots,x_n]_{d_1}\times\mathbb{F}_q[x_1,\ldots,x_n]_{d_r}$ which is Waring identifiable.
We remark that there are examples which still hold over finite fields, for instance the following one which is the one shown by Sylvester in \cite{Sy} over the complex field.

\begin{proposition}\label{prop:fin}
Let $t$ be a positive integer and $q\geq 2t+1$. Every point of $\PP^{2t+1}$ having $\VV_{1,2t+1}$-rank $t+1$ is Waring identifiable.
\end{proposition}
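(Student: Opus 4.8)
The plan is to translate everything into the classical language of binary forms and reduce the statement to a single Vandermonde fact about rational normal curves. First I would identify $\PP^{2t+1}$ with $\PP(\mathrm{Sym}^{2t+1}(\bF_q^2))$, so that a point $P$ is the class of a binary symmetric tensor (equivalently a binary form of degree $d:=2t+1$), and $\VV_{1,d}(\bF_q)$ is the set $\{[v^{\otimes d}] : v\in\bF_q^2\setminus\{0\}\}$, i.e.\ the rational normal curve of degree $d$ in $\PP^{d}$. The hypothesis $q\geq 2t+1$ enters at once: it says precisely that $\VV_{1,d}(\bF_q)$, which has $q+1\geq d+1$ points, spans $\PP^{2t+1}$ — so the $\VV_{1,d}(\bF_q)$-rank is defined for every point, in line with the standing assumption of the paper — and, since a degree-$d$ rational normal curve in $\PP^{d}$ has the property that any $d+1$ of its points are linearly independent (a Vandermonde determinant), the same is true over $\bF_q$. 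Under this dictionary the $\VV_{1,d}$-rank of $P=[f]$ is the least $r$ with $f=\sum_{i=1}^{r}c_i v_i^{\otimes d}$ for pairwise non-proportional $v_i$ and $c_i\in\bF_q^{*}$, and $P$ is Waring identifiable exactly when the set $\{[v_i^{\otimes d}]\}$ in such a minimal expression is unique.

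Next I would assume $P=[f]$ has rank $t+1$ and suppose $f=\sum_{v\in\mathcal L}c_v v^{\otimes d}=\sum_{w\in\mathcal M}c'_w w^{\otimes d}$ with $\mathcal L,\mathcal M$ sets of $t+1$ pairwise non-proportional vectors and all coefficients nonzero. The points $\{[v^{\otimes d}] : v\in\mathcal L\cup\mathcal M\}$ number at most $2(t+1)=d+1$, hence the tensors $\{v^{\otimes d} : v\in\mathcal L\cup\mathcal M\}$ are linearly independent in $\mathrm{Sym}^{d}(\bF_q^2)$. Subtracting the two expressions and collecting terms indexed by $\mathcal L\cup\mathcal M$, this independence forces the coefficient attached to each $v\in\mathcal L\setminus\mathcal M$ to vanish; since that coefficient is nonzero, $\mathcal L\setminus\mathcal M=\varnothing$, and symmetrically $\mathcal M\setminus\mathcal L=\varnothing$, so $\mathcal L=\mathcal M$ (and the coefficients then agree as well). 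This is uniqueness of the witness. The remaining task is to match the formal definition of Waring identifiability: the Waring subspaces of minimal dimension $t$ through $P$ are spanned by exactly $t+1$ points of $\VV_{1,d}(\bF_q)$ — at most $t+1$ since no $t+2$ points of the rational normal curve lie on a $\PP^{t}$, and at least (hence exactly) $t+1$ since fewer would force $f$ to have rank $\le t$ — so these subspaces correspond bijectively to length-$(t+1)$ expressions of $f$ and therefore share the unique witness just found; the clause that no proper subset of the witness spans a subspace containing $P$ holds for the same reason.

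The argument above is Sylvester's nineteenth-century example transported to $\bF_q$, and there is no serious obstacle in it once the problem is rephrased: everything rests on the Vandermonde independence of points on the rational normal curve together with the trivial observation that a minimal decomposition has nonzero coefficients. The one place where care is needed, if one wanted the finer statement describing exactly which binary forms of degree $2t+1$ have rank $t+1$, is apolarity: $f^{\perp}\subseteq\bF_q[u,v]$ is an Artinian Gorenstein ideal, hence a complete intersection $(g_1,g_2)$ with $\deg g_1\le\deg g_2$ and $\deg g_1+\deg g_2=d+2$, and rank $t+1$ would force $\deg g_1=t+1$ with $g_1$ squarefree and split over $\bF_q$; the only feature there beyond the characteristic-zero case is that the relevant apolar form must split \emph{completely over $\bF_q$}, which is precisely the gap between $\VV_{1,d}(\bF_q)$-rank and $\VV_{1,d}(\overline{\bF_q})$-rank. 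For the proposition as stated, however, the elementary route suffices.
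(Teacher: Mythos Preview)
Your proof is correct and takes essentially the same approach as the paper: both hinge on the single fact that the rational normal curve $\VV_{1,2t+1}$ is an arc in $\PP^{2t+1}$ (equivalently, any $2t+2$ of its $\bF_q$-points are linearly independent by a Vandermonde determinant), so that two length-$(t+1)$ decompositions together involve at most $2t+2$ independent curve points and must coincide. The paper phrases this geometrically---two $t$-dimensional Waring subspaces with disjoint witnesses are forced to be disjoint in $\PP^{2t+1}$---whereas you subtract the two tensor expressions and invoke independence directly; the content is identical, and your version handles the ``no proper subset'' clause of the definition a touch more explicitly.
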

\begin{proof}
As $\VV_{1,2t+1}$ is a normal rational curve of $\PP^{2t+1}$ and hence an arc in $\PP^{2t+1}$, any two $(t+1)$-dimensional Waring subspaces with respect to $\VV_{1,2t+1}$ are disjoint or meet in at least one point on $\VV_{1,2t+1}$.
Therefore, through any point $P$ of $\PP^{2t+1}$ having $\VV_{1,2t+1}$-rank $t+1$ there exists only one $(t+1)$-dimensional Waring subspace with respect to $\VV_{1,2t+1}$ and hence $P$ is Waring identifiable.
\end{proof}

The following corollary extends the known examples of Waring identifiability of \cite[Section 1]{AGMO}.

\begin{corollary}
If $q\geq 2t+1$, there exist forms in $\mathbb{F}_q[x_1,x_2]_{2t+1}$ of rank $t+1$ which are Waring identifiable.
\end{corollary}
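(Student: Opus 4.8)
The plan is to reduce the statement to Proposition~\ref{prop:fin} by exhibiting, for every $t\geq 1$ and every $q\geq 2t+1$, a point of $\PP^{2t+1}$ of $\VV_{1,2t+1}$-rank exactly $t+1$. Recall from Section~\ref{sec:Vero} that binary forms of degree $2t+1$ over $\bF_q$, i.e.\ elements of $\bF_q[x_1,x_2]_{2t+1}\cong\mathrm{Sym}^{2t+1}(\bF_q^2)$, correspond projectively to points of $\PP^{2t+1}=\PP(\mathrm{Sym}^{2t+1}(\bF_q^2))$, that the Waring rank of such a form equals the $\VV_{1,2t+1}$-rank of the associated point, and that Waring identifiability of the form is Waring identifiability of the point with respect to $\VV_{1,2t+1}$. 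Since $q\geq 2t+1$, the normal rational curve $\VV_{1,2t+1}$ has $q+1\geq 2t+2$ rational points and is an arc in $\PP^{2t+1}$, so any at most $2t+2$ of its rational points are in general position.

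First I would fix $t+1$ distinct rational points $P_1,\dots,P_{t+1}$ of $\VV_{1,2t+1}(\bF_q)$ and put $\Pi=\langle P_1,\dots,P_{t+1}\rangle$, which is a $\PP^t$ by the arc property. The key claim is that a point $Q\in\Pi$ has $\VV_{1,2t+1}$-rank at most $t$ if and only if $Q$ lies on one of the $t+1$ hyperplanes $\Pi_i=\langle P_1,\dots,\widehat{P_i},\dots,P_{t+1}\rangle$ of $\Pi$. For the nontrivial direction, if $\mathrm{rk}_{\VV_{1,2t+1}(\bF_q)}(Q)\leq t$ then $Q\in\langle S\rangle$ for some set $S$ of rational points of the curve with $|S|\leq t$; enlarging $S$ if necessary we may take $|S|=t$. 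Setting $k=|S\cap\{P_1,\dots,P_{t+1}\}|$, the union $S\cup\{P_1,\dots,P_{t+1}\}$ consists of $2t+1-k\leq 2t+2$ points of the arc and hence spans a $\PP^{2t-k}$; the Grassmann dimension formula gives $\dim(\langle S\rangle\cap\Pi)=(t-1)+t-(2t-k)=k-1$, and, the points being in general position, $\langle S\rangle\cap\Pi=\langle S\cap\{P_1,\dots,P_{t+1}\}\rangle$. As $k\leq t$, this $\PP^{k-1}$ is contained in some $\Pi_i$, which proves the claim.

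It then remains to observe that $\Pi$ is not covered by $\Pi_1\cup\cdots\cup\Pi_{t+1}$. The number of points of $\Pi$ is $\tfrac{q^{t+1}-1}{q-1}$, whereas $\Pi_1\cup\cdots\cup\Pi_{t+1}$ contains at most $(t+1)\tfrac{q^{t}-1}{q-1}$ points, and the inequality $q^{t+1}-1>(t+1)(q^{t}-1)$, equivalently $q^{t}(q-t-1)>-t$, holds because $q-t-1\geq t\geq 1$. Choosing $Q\in\Pi$ off all the $\Pi_i$ then gives $\mathrm{rk}_{\VV_{1,2t+1}(\bF_q)}(Q)=t+1$; by Proposition~\ref{prop:fin}, $Q$ is Waring identifiable, and translating back through the Veronese correspondence any nonzero binary form of degree $2t+1$ representing $Q$ has rank $t+1$ and is Waring identifiable.

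The only real content is the rank description of $\Pi$ in the second paragraph; once the arc property of the rational normal curve is invoked, it is a short application of the dimension formula, and the final step is the elementary counting inequality above. The rest is bookkeeping through the Veronese map, and the translation step is harmless because the relevant notions were already identified in Section~\ref{sec:Vero}.
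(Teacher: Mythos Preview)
Your proof is correct. The paper states the corollary without proof, treating it as an immediate consequence of Proposition~\ref{prop:fin}; your argument follows the same reduction but actually supplies the missing ingredient, namely the existence of a point of $\VV_{1,2t+1}$-rank exactly $t+1$, via the arc/Grassmann computation inside a fixed secant $\Pi=\langle P_1,\dots,P_{t+1}\rangle$ and the counting bound. One minor simplification you could make in the last step: instead of the counting inequality, the point represented by $P_1+\cdots+P_{t+1}$ (any choice of representative vectors) lies in $\Pi$ but in none of the $\Pi_i$, since the $P_j$ are linearly independent.
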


Note that the proof of Proposition \ref{prop:fin} may be adapted to any arc in a projective space of odd dimension. 
An example of such an arc is the following: 
let $h$ and $e$ two positive integers with $\gcd(h,e)=1$, let $q=2^h$ and let $\sigma\colon x \in \bF_q\mapsto x^{2^e}$ then the set
\[ \{(1,t,t^\sigma,t^{\sigma+1}) \colon t \in \bF_q\} \cup \{(0,0,0,1)\} \]
is an arc of $q+1$ points in $\PP^3$ known as the \emph{Segre arc}, \cite{Segre} and see also \cite{BL}.

\section{Acknowledgement}
The authors are thankful to Daniele Bartoli for valuable discussions and ideas related to Lemma \ref{lem:int2cones}.
The second author is very grateful for the hospitality of the Sabanc\i{} University, where he was a visiting researcher during the development of this research in October 2018 and November 2019.

\bigskip

\par\noindent Michel Lavrauw\\
Sabanc\i{} University, Istanbul, Turkey\\
{{\em mlavrauw@sabanciuniv.edu}}

\bigskip
\par\noindent Ferdinando Zullo\\
Universit\`a degli Studi della Campania ``Luigi Vanvitelli'', Caserta, Italy\\
{{\em ferdinando.zullo@unicampania.it}}


\begin{thebibliography}{100}


\bibitem{AlLaPo2022}
{\sc N. Alnajjarine, M. Lavrauw, and T. Popiel:}
Solids in the space of the Veronese surface in even characteristic,
{\em Finite Fields Appl.} {\bf 83} (2022),

\bibitem{AlLa202*}
{\sc N. Alnajjarine, M. Lavrauw:}
Planes intersecting the Veronese surface in $\PG(5,q)$, $q$ even.
Preprint (arXiv:2209.08354).

\bibitem{AnCh2020} 
{\sc E. Angelini and L. Chiantini:}
On the identifiability of ternary forms,
{\em Linear Algebra Appl.} {\bf 599} (2020), 36--65. 

\bibitem{AGMO}
{\sc E. Angelini, F. Galuppi, M. Mella and G. Ottaviani:}
On the number of Waring decompositions for a generic polynomial vector,
\emph{J. Pure Appl. Algebra} {\bf{222}} (2018), 950-965.

\bibitem{AlHi1995} 
{\sc J. Alexander and A. Hirschowitz}
Polynomial interpolation in several variables,
{\em J. Algebraic Geom.} {\bf 4} (1995), no. 2, 201–222. 

\bibitem{Aubry1992}
{\sc Y. Aubry:} 
Reed-Muller codes associated to projective algebraic varieties, 
in \emph{Coding Theory and Algebraic Geometry and Coding Theory}, Luminy, France, June 17–21, 1991, vol. 1518, pp. 4–17, Lecture Notes in Mathematics, Berlin, Germany: Springer-Verlag, 1992.

\bibitem{BL}
{\sc S. Ball and M. Lavrauw:}
Arcs in finite projective spaces,
\emph{EMS Surveys in Mathematical Sciences} {\bf{6}(1-2)} (2019), 133-172.

\bibitem{Ballico2005}
{\sc E. Ballico:} On the weak non-defectivity of Veronese embeddings of projective spaces, {\em Cent. Eur. J. Math.} 3 (2005), no. 2, 183--187.

\bibitem{BBCC}
{\sc E. Ballico, A. Bernardi, M.V. Catalisano and L. Chiantini:}
Grassmann secants, identifiability, and linear systems of tensors, 
{\em Linear Algebra Appl.} {\textbf{438(1)}} (2013),  121--135.

\bibitem{BruenHir}
{\sc A.A. Bruen and J.W.P. Hirschfeld:}
Intersections in projective space II: pencils of quadrics,
\emph{Europ. J. Combinatorics} {\bf 9} (1988), 255--270.

\bibitem{CeOz2010}
{\sc M. Cenk and F. \"Ozbudak:} 
On multiplication in finite fields, {\em J. Complex.} {\bf 26(2)} (2010), 172--186.

\bibitem{ChOtVa2017}
{\sc L. Chiantini, G. Ottaviani, N. Vanniewenhoven:} On generic identifiability of symmetric tensors of subgeneric rank, {\em Trans. Amer. Math. Soc.} {\bf 369} (2017), no. 6, 4021--4042. 

\bibitem{ChCh1988}
{\sc D. V. Chudnovski, G. V. Chudnovski:} Algebraic complexities and algebraic curves over finite fields, {\em J. Complex.} {\bf 4} (1988) 285--316.

\bibitem{Dodson1971}
{\sc M.M. Dodson:} On Waring's problem in ${\mathrm{GF}}[p]$, {\em Acta Arith.} {\bf 19} (1971), 147--173.

\bibitem{DoTi1976}
{\sc M.M. Dodson and A. Tiet\"av\"ainen:} A note on Waring's problem in $GF(p)$, {\em Acta Arith.} {\bf 30:2} (1976), 159--167.


\bibitem{code1}
{\sc F.A.B. Edoukou:}
Codes defined by forms of degree $2$  on quadric surfaces,
{\em IEEE Trans. Inform. Theory} {\bf 54(2)} (2008), 860--864.

\bibitem{Harris}
{\sc J. Harris:}
Algebraic geometry: A first Course,
\emph{Springer Science \& Business Media} (Vol. {\bf 133}), 1992.


\bibitem{Hir3}
{\sc J.W.P. Hirschfeld:}
Finite projective spaces of three dimensions, \emph{Oxford: Clarendon Press}, 1985.

\bibitem{Konyagin1992}
{\sc S. V. Konyagin:} On estimates of Gaussian sums and Waring's problem for a prime modulus, {\em Trudy Mat. Inst. Steklov.} {\bf 198} (1992), 111--124 (in Russian); English transl.: {\em Proc. Steklov Inst. Math.} 1994, {\bf 1}, 105--117.

\bibitem{code3}
{\sc G. Lachaud:} 
Number of points of plane sections and linear codes defined on algebraic varieties,
in \emph{Arithmetic, Geometry, and Coding Theory}. Luminy, France: Walter de Gruyter, 1996, pp. 77–104.

\bibitem{Laskowski1982}
{\sc S.J. Laskowski:}
Computing lower bounds on tensor rank over finite fields, 
\emph{J. Comput. System Sci.} {\textbf{24(1)}} (1982), 1--14.


\bibitem{LaPaZa2013}
{\sc M. Lavrauw, A. Pavan, C. Zanella:}
On the rank of $3\times 3\times 3$-tensors,
{\em Linear Multilinear Algebra} {\bf 61} (2013) 648--652.

\bibitem{LP}
{\sc M. Lavrauw and T. Popiel:}
The symmetric representation of lines in $\PG(\mathbb{F}^3\otimes\mathbb{F}^3)$,
\emph{Discrete Math.} {\textbf{343(4)}} (2020), \href{https://doi.org/10.1016/j.disc.2019.111775}{https://doi.org/10.1016/j.disc.2019.111775}.

\bibitem{LPS}
{\sc M. Lavrauw, T. Popiel and J. Sheekey:}
Nets of conics of rank one in $\mathrm{PG}(2,q)$, $q$ odd,
\emph{J. Geom.} {\textbf{111(3)}} (2020), 1--35.

\bibitem{LaSh2022}
{\sc M. Lavrauw, J. Sheekey:}
The tensor rank of semifields of order 16 and 81,
{\em Linear Algebra Appl.} {\bf 643} (2022) 99--124.

\bibitem{Pate}
{\sc T.H. Pate:}
Subspace filters and polar bases for spaces of symmetric multilinear functions,
{\it Linear Multilinear Algebra} {\bf 65(10)} (2017), 2076--2100.

\bibitem{RaVo2017}
{\sc K. Ranestad and C. Voisin:}
Variety of power sums and divisors in the moduli space of cubic fourfolds, {\em Doc. Math.} {\bf 22} (2017), 455–504. 

\bibitem{Segre}
{\sc B. Segre:} 
Introduction to Galois geometries, 
\emph{Atti Accad. Naz. Lincei Mem.} {\bf 8} (1967), 133--236.

\bibitem{SoZhHu2022}
{\sc X. Song, B. Zheng, R. Huang:} The symmetric rank and decomposition of $m$-order $n$-dimensional ($n = 2, 3, 4$) symmetric tensors over the binary field,
{\em Linear Algebra Appl.} {\bf 653} (2022) 1--32.

\bibitem{Sy}
{\sc J.J. Sylvester:}
The Collected Mathematical Papers, Vol. {\bf I}, Cambridge University Press,
1904.

\bibitem{Waring}
{\sc E. Waring:}
Meditationes Algebricae, \emph{Cambridge: J. Archdeacon.} (1770).

\bibitem{Winterhof1998}
{\sc A. Winterhof:} 
On Waring's problem in finite fields, {\em Acta Arith.} {\bf 87} (1998), 171--177.

\end{thebibliography}
\end{document}